\newtheorem{thm}{Theorem}[section]
\newtheorem{cor}[thm]{Corollary}
\newtheorem{lemma}[thm]{Lemma} 
\newtheorem{ex}[thm]{Example}
\newtheorem{defn}[thm]{Definition}
\newtheorem{rem}[thm]{Remark}
\numberwithin{equation}{section}
\newcommand{\punt}{\boldsymbol{.}}
\newcommand{\nor}{\mathcal N}
\newcommand{\invv}{\scriptscriptstyle{-1}}
\newcommand{\E}{\mathbb E}
\newcommand{\m}{\boldsymbol{m}}
\newcommand{\nub}{\boldsymbol{\nu}}
\newcommand{\etab}{\boldsymbol{\eta}}
\newcommand{\mub}{\boldsymbol{\mu}}
\newcommand{\ibs}{\boldsymbol{i}}
\newcommand{\jbs}{\boldsymbol{j}}
\newcommand{\timesmall}{\mathsmaller{\mathsmaller{\times}}}
\newcommand{\yp}
{\{y\}^{\scriptscriptstyle{1}}_{\scriptscriptstyle{p}}}
\newcommand{\xn}
{\{x\}^{\scriptscriptstyle{1}}_{\scriptscriptstyle{n}}}
\newcommand{\ynn}{\{y\}^{\scriptscriptstyle{1}}_{\scriptscriptstyle{2}}}
\newcommand{\xnn}{\{x\}^{\scriptscriptstyle{1}}_{\scriptscriptstyle{3}}}
\newcommand{\deltap}
{\{\delta\}^{\scriptscriptstyle{1}}_{\scriptscriptstyle{p}}}
\newcommand{\deltan}
{\{\tilde{\delta}\}^{\scriptscriptstyle{1}}_{\scriptscriptstyle{n}}}
\newcommand{\mm}{\boldsymbol{m}}
\newcommand{\X}{\boldsymbol{X}}
\newcommand{\trasp}{\scriptscriptstyle{\sf T}}
\newcommand{\Tr}{{\rm Tr}}
\begin{document}

\title{\Large Polynomial traces and elementary symmetric functions in the latent roots of a non-central Wishart matrix \thanks{The original source of publication is available in {\sl Journal of Multivariate Analysis}, 179 (2020) https://doi.org/10.1016/j.jmva.2020.104629}}

\author{Elvira Di Nardo \footnote{E. Di Nardo: Dipartimento di Matematica \lq\lq G.Peano\rq\rq, Universit\`a di Torino, Via Carlo Alberto 10, 10123, Torino, Italy, e-mail: \texttt{edinardo@unito.it}}}

\date{}

\maketitle

\begin{abstract}
Hypergeometric functions and zonal polynomials are the tools usually addressed in the literature to deal with the expected value of the elementary symmetric functions in non-central Wishart latent roots. The method here proposed recovers the expected value of these  symmetric functions by using  the umbral operator applied to the trace of suitable polynomial matrices and their cumulants. The employment  of a suitable linear operator in place of hypergeometric functions and zonal polynomials was  conjectured by de Waal in \cite{Dewaal}. Here we show how the umbral operator accomplishes this task and consequently represents an alternative tool to deal with these  symmetric functions.  When special formal variables are plugged in the variables,  the evaluation through the umbral operator deletes all the monomials in the latent roots except those contributing in the elementary symmetric functions. Cumulants further simplify the computations taking advantage of the convolution structure of the polynomial trace. Open problems are addressed at the end of the paper.
\end{abstract}
\smallskip \noindent
{\it Keywords: }{\small symbolic method of moments, generating function, cumulants, Wishart matrix, symmetric function}
\\ \smallskip \noindent
{\it AMS Mathematics Classification:} {\small Primary: 60E10, 62H10} {\small Secondary: 05E05}
\section{Introduction\label{sec:1}}
The elementary symmetric function (e.s.f.) of degree~$i \le p$ in variables $y_1,\ldots, y_p$ is \cite{Macdonald}
\begin{equation}
e_{i}(y_1, \ldots, y_p)  = \!\!\!\!\! \sum_{1 \leq {j}_1 <  \cdots < {j}_{i} \leq p} \!\!\!\!\!\!\!\! y_{j_1} \times  \cdots \times y_{j_i} = \frac{1}{i !} B_{i}(g_1 s_1, \ldots, g_{i} s_{i}),  \quad  {i} \leq p
\label{(elfun)}
\end{equation}
where $B_{i}$ is  the ${i}$-th complete (exponential) Bell polynomial, $s_k = \sum_{i=1}^p y_{i}^k$ is the $k$-th power sum symmetric polynomial in $y_1, \ldots, y_p$ and $g_k=(-1)^{k-1} (k-1)!$ for all nonnegative integers $k.$ We also use the convention that $e_0 = 1.$

Suppose to replace $y_1, \ldots, y_p$ in \eqref{(elfun)} with the latent roots $Y_1, \ldots, Y_p$  of  a $p \times p$ random matrix $S$ and denote by $\Tr_i(S)$ its $i$-th e.s.f. in $Y_1, \ldots, Y_p.$ Taking the expectation $\E$ of both sides in \eqref{(elfun)}, we recover  $\E[\Tr_i(S)]$ through $\E[e_{i}(Y_1, \ldots, Y_p)].$ The usefulness of an explicit expression of $\E[\Tr_i(S)]$  essentially relies on the fundamental theorem on symmetric functions, as any symmetric polynomial has an expression in terms of the e.s.f.'s \cite{Stanley}. Moreover latent roots of random matrices are employed in various multivariate test procedures \cite{PARUCHURI}. More applications are given in \cite{Mathai} and references therein.
As outlined in  \cite{PARUCHURI}, the computation of $\E[\Tr_i(S)]$ using the latent root distribution can be a difficult task. A different 
procedure might consist in resorting joint moments $\E [\mu(S)(\tau)]=\E \prod_{c \in C(\tau)} \Tr\big(S^{{\mathfrak l}(c)}\big)$ of $S\!,$ introduced in \cite{CC}. Indeed, if ${\mathfrak S}_i$ denotes the symmetric group, a different expression of $e_{i}(Y_1, \ldots, Y_p)$  is (see the Appendix)
\begin{equation}
i! e_{i}(Y_1, \ldots, Y_p) =  \sum_{\tau \in {\mathfrak S}_i} (-1)^{i-|C(\tau)|} \prod_{c \in C(\tau)} \Tr\left(D_Y^{{\mathfrak l}(c)}\right)  
\label{(symPS1bis)}
\end{equation}
where the summation is over all permutations $\tau \in {\mathfrak S}_i$ of $[i] = \{1, \ldots,i\},$ $C(\tau)$ denotes the standard representation of $\tau$ in disjoint cycles, ${\mathfrak l}(c)$ is the cardinality of the cycle $c \in C(\tau)$ and $D_Y={\rm diag}(Y_1, \ldots, Y_p).$ 
Taking the expectation $\E$ of both sides in (\ref{(symPS1bis)}), we recover $i! \E[\Tr_i(S)]$ in terms of  joint moments  $\E [\mu(S)(\tau)].$  But also the computation of $\E [\mu(S)(\tau)]$ is a quite difficult task, even for well known matrix variate distributions, as the Wishart ones. Let us recall that the (non-singular) non-central Wishart random matrix of order $p$ is 
$$
W_p(n, \Sigma, M) =  {\mathsf X} {\mathsf X}^{\trasp} \qquad \hbox{with} \,\, n \geq p
$$
where ${\mathsf X}=(\X_1, \ldots, \X_n) \sim \nor_{p,n}(M, \Sigma, I_n)$ is a $p \times n$-matrix variate normal distribution with mean  $M = (\m_1, \ldots, \m_n),$ row covariance matrix $\Sigma$ and column covariance matrix $I_n,$ that is $\X_1, \ldots, \X_n$ are column random vectors independently drawn from a $p$-variate normal distribution $\X_i \sim {\mathcal N}(\m_i, \Sigma)$  with mean $\m_i \in {\mathbb R}^p$ and full rank covariance matrix $\Sigma$ of order $p.$ Note that $\Omega = \Sigma^{{\invv}} M M^{\trasp}$ is named non-centrality matrix.  

For $W=W_p(n,\Sigma,0)$ there are manageable closed form formulae for joint moments  $\E [\mu(W)(\tau)],$  see \cite{Wishartmio, Letac}. Hence, taking into account (\ref{(symPS1bis)}), an explicit expression of $\E \left[ \Tr_i(W) \right]$ might be recovered depending on $\E [\mu(W)(\tau)].$ But for $M \ne 0$ the computation of $\E [\mu(W)(\tau)]$ is quite cumbersome and in the literature a different way has been addressed. In particular,  using hypergeometric functions, zonal polynomials and the character of the symmetric group  ${\mathfrak S}_i,$ Shah and Khatri \cite{Shah} prove that
\begin{equation}
\E \left[ \Tr_i(W) \right] = (n)_i \Tr_i (\Sigma) + \sum_{k=1}^i (n-k)_{i-k} \sum_{j(i)} \det \big(\Sigma_{j(i)} \big) \Tr_j [\Sigma_{j(i)}^{\invv} (M M^{\trasp})_{j(i)}],
\label{(second)}
\end{equation}
where  the inner summation is over all the ordered $i$-tuples $j_1, \ldots, j_i$ of integers choosen in $\{1, \ldots, p\}$ and $\Sigma_{j(i)}$ and $ (M M^{\trasp})_{j(i)}$ are the principal submatrices formed with the $j_1, \ldots, j_i$-th rows and $j_1, \ldots, j_i$-th columns of $\Sigma$ and $M M^{\trasp}$ respectively. Special cases of \eqref{(second)} are \cite{Dewaal, Saw}
\begin{equation}
\E[\Tr_i (W)] \! = \! \left\{ \begin{array}{ll} 
(n)_i \Tr_i (\Sigma) & \hbox{if} \,\, M = 0 \vspace{0.3cm} \\
\sigma^{2i} \sum_{j=0}^i (n-j)_{i-j} \binom{p-j}{i-j} \Tr_j (M M^{\trasp})  & \hbox{if} \,\, \Sigma = \sigma^2 I_p \vspace{0.3cm} \\
\det(\Sigma) \sum_{j=0}^p (n-j)_{p-j} \Tr_j (\Omega) & \hbox{if} \,\, i=p 
\end{array} \right.
\label{(first)} 
\end{equation}
where $(n)_j= n (n-1) \cdots (n-j+1)$ for $1 \leq j \leq p \leq n.$ 
In this paper, our interest is focused on the latter conjecture formulated by de Waal in \cite{Dewaal}, about the existence of a suitable linear operator  providing \eqref{(first)} without using hypergeometric functions and zonal polynomials. 

Following his intuition, our aim is to recover  \eqref{(first)}, and then \eqref{(second)}, using the evaluation umbral operator introduced in \cite{SIAM} and the symbolic method \cite{DiNardo}.
If we wanted to apply the classical umbral calculus \cite{SIAM} plainly, we would have to use the algebra of formal power series and the characteristic function of the latent roots of $W,$  see Section~\ref{sec:3}. Unfortunately this characteristic function \cite{Smith} has a quite cumbersome expression to be expanded in formal power series. Therefore in this paper, we propose to use the symbolic method \cite{DiNardo} arising from  the umbral calculus and involving the algebra of cumulant polynomials \cite{CumDin}. Despite its algebraic flavor, the method is also known as algebra of probability \cite{Twelve} since its syntax matches the one of random vectors. For readers who are unaware of the method, a short introduction is given in Sections ~\ref{sec:2} where the algebra of probability  is addressed in terms of umbrae and where we recall notations and definitions needed to work with. The extension to the multivariate framework is recalled in Section~\ref{sec:3} (for more details see  \cite{DiNardo} and references therein). 

Let us underline that the symbolic method was already employed for computing moments and cumulants of $\Tr({\mathsf X} {\mathsf X}^{\trasp})$ by using Sheffer polynomials, see \cite{Wishartmio}.  Therefore the contents of this paper represent a prosecution 
of \cite{Wishartmio} with one more application.  We introduce a new class of polynomials, the polynomial trace $\Tr (D_y {\mathcal X} D_x)(D_y {\mathcal X} D_x)^{\trasp},$ where $D_x$ and $D_y$ are diagonal matrices of $n$ and $p$ indeterminates respectively and ${\mathcal X}$ is a suitable {\it formal} matrix  {\it symbolizing} ${\mathsf X}.$  Formulae \eqref{(second)}  and  \eqref{(first)} are then recovered by using this new class of polynomials and by taking advantage of the additivity property of their cumulants arising from the convolution between the central component of $\Tr (D_y {\mathcal X} D_x)(D_y {\mathcal X} D_x)^{\trasp}$  and the trace of a formal matrix involving $M$ and $\Sigma.$ Note that the symbolic calculus for cumulants of random matrices has been developed in \cite{annali}.  As shown in the last section, when we replace the indeterminates of $D_x$ and $D_y$ with suitable umbrae and evaluate the resulting polynomial through the umbral operator, only the monomials contributing in \eqref{(elfun)} give not zero contribution.  The same strategy has been already applied to recover different families of symmetric polynomials, as for example
the product of augmented polynomials in separately independent and identically distributed random variables \cite{DiNardo}. Open problems are addressed at the end of the paper.

\section{The moment symbolic calculus\label{sec:2}}
Denote by ${\mathcal A}=\{\alpha, \gamma, \ldots\}$ an alphabet of symbols called umbrae. The evaluation (umbral) linear operator $E$ is defined on the polynomial ring ${\mathbb R}[{\mathcal A}],$ with values in ${\mathbb R}$ and such that $E[1]=1$ and 
\begin{enumerate}
\item[{\rm (i)}] $E(\alpha^i) = a_i$ for all nonnegative integers $i$  with $a_0=1$,
\item[{\rm (ii)}] $E(\alpha^i \gamma^j \times \cdots) = E(\alpha^i) E(\gamma^j) \times \cdots$ for distinct umbrae $\alpha, \gamma, \ldots$ and nonnegative integers $i,j,\ldots$ (uncorrelation property).
\end{enumerate}
The sequence $\{a_i\}_{i \geq 0}$  is said to be umbrally represented by $\alpha$, and $a_i$ is called the $i$-th moment of the umbra $\alpha.$ Distinct symbols of ${\mathcal A}$ denote uncorrelated umbrae. Two umbrae $\alpha$ and $\gamma$ are said to be similar iff $E(\alpha^i) = E(\gamma^i)$ for all nonnegative integers $i,$ in symbols $\alpha \equiv \gamma.$
By extending coefficientwise the operator $E$ to the ring of formal 
power series ${\mathbb R}[[z]],$ the generating function (g.f.) of $\alpha$ is the formal power series
\begin{equation}
f(\alpha,z)= E \big( e^{\alpha z} \big) = \sum_{i \geq 0}  a_i \frac{z^i}{i!} \in {\mathbb R}[[z]].
\label{(genfun)}
\end{equation}
Thus, $\alpha \equiv \gamma$ iff $f(\alpha,z)=f(\gamma,z)$ and the alphabet ${\mathcal A}$ can be endowed with sufficiently many umbrae similar with any expression whatsoever \cite{SIAM}.  Moreover the formal power series  \eqref{(genfun)} needs not have a convergence region \cite{Stanley}.

A random variable (r.v.) with  all moments $\{\E(X^i)\}_{i \geq 0}$ is represented by an umbra $\alpha$ having the same moments. In particular if $X$ admits moment generating function (m.g.f.) ${\mathcal M}_X(z),$ then $X$ is represented by an umbra $\alpha$ with $f(\alpha,z)={\mathcal M}_X(z).$ For example, the r.v. $X$ such that ${\mathbb P}(X=1)=1$ has all moments equal to $1$ and is represented by the unit umbra $u$ with g.f. $f(u,z)=\exp(z).$ The Poisson r.v. $\mathcal{P}(1)$ is represented by the Bell umbra $\beta$ such that $f(\beta,z)=\exp[\exp(z-1)].$ If $X$ is a r.v. with m.g.f. ${\mathcal M}_{X}(z),$  any polynomial $p(X)$ with m.g.f. ${\mathcal M}_{p(X)}(z)$ can be represented by $p(\alpha),$ where $\alpha$ is an umbra representing $X.$ For a discussion on the employment of formal power series in dealing with a finite sequence of moments see \cite{Taqqu}.

The correspondence between umbrae and r.v.'s is not one-to-one. Despite the evaluation operator resembles  the expectation of a r.v., the formal variables $\alpha, \gamma, \ldots$  need not have a probabilistic counterpart. For example, the sequence $\{1,1,0,\ldots\}$ is represented by the so-called singleton umbra $\chi,$ with g.f. $f(\chi,z)=1+z,$ which does not have a probabilistic counterpart. Moreover there are r.v.'s that cannot be represented by an umbra as they do not have moments. These issues and other noteworthy probabilistic aspects of the umbral calculus have been developed in \cite{DiBucchianico}. Two umbrae will play a special role in the following: the singleton umbra and the delta umbra $\delta,$ such that $\delta^2 \equiv \chi$ and $f(\delta,z)=1+\frac{z^2}{2}.$  

\begin{ex}[Normal r.v.]\label{ex2.1}
{\rm Suppose $X \sim {\mathcal N}(0, 1)$ a standard normal r.v. As ${\mathcal M}_X(z) = \exp \left[f(\delta,z)-1\right]$ is the composition of $\exp[\exp(z-1)]$ and $1+ \log \left[f(\delta,z)-1\right]\!,$ then $X$ is represented by the  $\delta$-exponential auxiliary umbra $\beta \punt \delta.$ To simplify the notation, we denote $\beta \punt \delta$ by $\zeta.$ Thus the r.v.$\,X \sim {\mathcal N}(m, \sigma^2)$ is represented by $m u + \sigma \zeta$ as
$f\left(m u + \sigma \zeta, z \right)  =  \exp \big( m z + \frac{1}{2} \sigma^2 z^2 \big).$ A non-central chi-squared r.v. $\!\!\!$ with degree of freedom $1$ and non-centrality parameter $m$ is represented by $(m u + \zeta)^2.$}
\end{ex}
\subparagraph{Elementary symmetric functions.}
Details on this symbolic calculus for symmetric functions are given in \cite{DiNardo}. Here we just recall the results we need in the following. Suppose $\{q_i\}_{i \geq 0}$ a sequence of polynomials in  $y_1, \ldots, y_p$ such that $q_0=1$ and $\hbox{deg}(q_i)=i$ for all positive integers $i.$ To represent  such a sequence with an umbra, we replace the field ${\mathbb R}$  with the ring of polynomials ${\mathbb R}[y_1, \ldots, y_p]$ and consider the evaluation operator $E: {\mathbb R}[y_1, \ldots, y_p][{\mathcal A}] \mapsto {\mathbb R}[y_1, \ldots, y_p]$ such that
$$
E \left( y_i^{l_1} y_j^{l_2} \times \cdots \times \nu^{m_1} \mu^{m_2} \times \cdots \right) = y_i^{l_1} y_j^{l_2} \times \cdots \times E \left( \nu^{m_1} \mu^{m_2} \times \cdots \right) 
$$
for all $\nu, \mu, \ldots \in {\mathbb R}[{\mathcal A}],$ 
for all $i,j,\ldots \in \{1, \ldots,p\}$  and for all nonnegative integers $l_1,l_2,\ldots, m_1,m_2, \ldots.$ The umbra representing the polynomial sequence  $\{q_i\}_{i \geq 0}$ is said to be polynomial.
The elementary symmetric polynomial umbra  
$\epsilon(y_1, \ldots,y_p) = \chi_1 y_1 + \cdots + \chi_p y_p$ is an example as its moments are the e.s.f.'s in $y_1, \ldots, y_p$
\begin{equation}
E[(\chi_1 y_1 + \cdots + \chi_p y_p)^i] = \left\{ \begin{array}{ll}
i! e_i(y_1, \ldots, y_p), &  i=0, \ldots, p, \\
0, & i > p.
\end{array} \right.
\label{(efsy1)}
\end{equation} 
The g.f. is $f(\chi_1 y_1 + \cdots + \chi_p y_p, z)= \prod_{j=1}^p (1 + y_j z) .$ Notice that $\epsilon(1, \ldots, 1) = \chi_1 + \cdots + \chi_p$ has moments $(p)_i$ for $i \leq p$ and $0$ otherwise. To lighten the notation, in the following we denote $\chi_1 + \cdots + \chi_p$ by the auxiliary umbra $p \punt \chi,$ see  \cite{DiNardo} for more properties on auxiliary umbrae. 
\begin{ex}[$U$-statistics]\label{rem3.4}{\rm
If we plug $\alpha$  in $y_1, \ldots, y_p,$ from \eqref{(efsy1)} we have  $i! E[e_i(\alpha, \ldots, \alpha)] = E(\alpha^i) E[\epsilon(1, \ldots, 1)^i] = a_i (p)_i$ for $i \leq p.$ If we plug distinct umbrae  $\alpha_1, \ldots, \alpha_p$ similar to $\alpha$ in $y_1, \ldots, y_p,$ then $i! E[e_i(\alpha_1, \ldots, \alpha_p)]$ allow to recover the $U$-statistics of a random sample \cite{DiNardo}.}
\end{ex}
\section{The multivariate framework\label{sec:3}}
To represent random vectors, we consider umbral polynomials \cite{DiNardo}. A multi-indexed sequence $\{g_{\ibs}\}$ with $g_{\ibs} = g_{i_1, \ldots,i_p}$ and $\ibs=(i_1, \ldots,i_p) \in {\mathbb N}_0^p$ is represented by a $p$-tuple
$\nub=(\nu_1, \ldots, \nu_p)$ of umbral polynomials $\nu_1, \ldots, \nu_p \in 
{\mathbb R}[{\mathcal A}],$ if $g_{\boldsymbol{0}}=1$ and $E(\nub^{\ibs})=g_{\ibs}$ for all $\ibs \in {\mathbb N}_0^p.$
  By extending coefficientwise the evaluation $E,$ the g.f. of $\nub$ is
\begin{equation}
f(\nub,\bm{z}) = E[ \exp (\nu_1 z_1 + \cdots + \nu_p z_p  )] = \sum_{k \geq 0} \sum_{|\ibs|=k} \frac{g_{\ibs}}{\ibs!} \bm{z}^{\ibs} \in {\mathbb R}[[\bm{z}]],
\label{(genmult)}
\end{equation}
where $\bm{z}=(z_1, \ldots, z_p), |\ibs| = i_1 + \cdots + i_p,$ and $\ibs!=i_1! \times \cdots \times i_p!.$ If $g_{\ibs}$ is the $\ibs$-th multivariate moment of a (column) random vector $\X$
or ${\mathcal M}_{\X}(\bm{z})$ admits power series expansion \eqref{(genmult)}, then $\X$ is said umbrally 
represented by $\nub$  and $g_{\ibs}$ is  the $\ibs$-th multivariate moment of $\nub.$ Consider $\mub=(\mu_1, \ldots, \mu_p)$
with $\mu_1, \ldots, \mu_p \in {\mathbb R}[{\mathcal A}].$
\begin{defn}\label{2.2}
$\nub$ and $\mub$ are said to be similar if $f(\nub,\bm{z})=f(\mub,\bm{z}),$
in symbols $\nub \equiv \mub.$
\end{defn}
To represent not independent r.v.'s, we use {\it related} umbral polynomials. Let us recall that, when an umbral polynomial $\nu$ is written as a linear combination of distinct monomials with not zero coefficients, its support supp$(\nu)$ is the set of all umbrae that occur in some such monomial with a positive power \cite{SIAM}. A set of umbral polynomials with supports of any two of them disjoint is said to be unrelated (otherwise related).
\begin{defn}\label{2.3}
$\nub$ and $\mub$ are said to be unrelated if $\hbox{\rm supp}(\nub) = \cup_{i=1}^p \hbox{\rm supp}(\nu_i)$  is disjoint with $\hbox{\rm supp}(\mub) = \cup_{i=1}^p \hbox{\rm supp}(\mu_i).$
\end{defn}
If $\nub $ and $\mub$ are unrelated then $E[\nub^{\ibs} \mub^{\jbs}] = E[\nub^{\ibs}]E[\mub^{\jbs}]$ for all $\ibs,\jbs \in {\mathbb N}_0^p$ and in particular $f(\mub + \nub, \bm{z})= f(\mub, \bm{z})f(\nub, \bm{z}).$ 
\begin{ex}[Normal umbral $p$-tuples] \label{ex22} {\rm
Suppose $\X \sim \nor_p(\mm,\Sigma)$ having m.g.f. ${\mathcal M}_{\X}(\bm{z})  =  \exp( \bm{z} \mm  + \frac{1}{2} \bm{z} \Sigma \bm{z}^{\trasp}).$ Thus $\X$ is represented by an umbral $p$-tuple $\etab$ having g.f. ${\mathcal M}_{\X}(\bm{z})$ and we write $\etab \equiv  \nor_p(\mm,\Sigma).$ Recall that, if $C$ and $D$ are two $p \times q$-matrices, then the Hadamard product $C \circ D$ is the $p \times q$ matrix such that  $(C \circ D)_{ij} = (C)_{ij}(D)_{ij}$ for $i=1,\ldots,p$ and $j=1,\ldots,q.$ Thus  
$\nor_p(\mm,\Sigma) \equiv \mm^{\trasp} \circ \bm{u} \,  + \bm{\zeta} \Sigma^{1/2},$ where $\bm{u}$ is a $p$-tuple of distinct unity umbrae, $\bm{\zeta}$  is a $p$-tuple of distinct $\delta$-exponential umbrae and $\circ$ is the Hadamard product.
 Indeed as $\bm{u}$  and $\bm{\zeta}$ are unrelated, we have $f( \mm^{\trasp} \circ \bm{u} + \bm{\zeta} \Sigma^{1/2}, \bm{z}) = f(\mm^{\trasp} \circ \bm{u}, \bm{z}) f\big(\bm{\zeta} \Sigma^{1/2}, \bm{z} \big)$ with $f(\mm^{\trasp} \circ \bm{u}, \bm{z}) = \exp \left( \bm{z} \mm \right)$ and  $f(\bm{\zeta} \Sigma^{1/2}, \bm{z}) = f(\bm{\zeta}, \bm{z} \Sigma^{1/2}) = \exp (\frac{1}{2} \bm{z}\Sigma  \bm{z}^{\trasp} ).$}
\end{ex}
\begin{ex}[Generalized non-central chi-squared r.v.]\label{ex3.4}{\rm Suppose $\X \sim \nor_p(\mm,\Sigma)$ and consider $\Tr(\X \X^{\trasp})$ $= \X^{\trasp} \X,$ with degree of freedom $p$ and non-centrality parameter $\Sigma^{-1} \bm{m} \bm{m}^{\trasp}.$ Its m.g.f. is \cite{Gupta} 
\begin{align} 
{\mathcal M}_{\X^{\trasp} \X}(z)  & = \det{(I_p - 2 z \Sigma)}^{-1/2} \exp\left\{\frac{1}{2} \Tr \big[ \big((I_p - 2 \Sigma z)^{-1} - I_p\big) \Sigma^{-1} \bm{m} \bm{m}^{\trasp} \big] \right\} \nonumber \\
& = \exp \left[ \frac{1}{2} \sum_{k \geq 1} \frac{2^k z^k}{k}\Tr \big(\Sigma^k + \,\, k \Sigma^{k-1} \mm \mm^{\trasp}\big)\right]
\label{(fsr)} 
\end{align} 
where the function at the r.h.s. of \eqref{(fsr)} is obtained using the well-known equations
\begin{equation}  
\det{(I_p - z A)}^{-1}=\exp \bigg( \sum_{k \geq 1} \frac{z^k}{k} \Tr(A^k) \bigg) \,\, \,\, \,\, {\rm and} \,\, \,\, \,\,  (I_p - z A)^{-1} = \sum_{k \geq 0} z^k A^{k}
\label{(omsum1)}
\end{equation}
with $A$ a $p \times p$ matrix. If $\etab  \equiv \nor_p(\mm,\Sigma),$ then $\X^{\trasp} \X$ is represented  by $\etab \etab^{\trasp}.$ Indeed, suppose $\Sigma=Q D_{\theta} Q^{\trasp},$ with eigenvalues $D_{\theta} = {\rm diag}(\theta_1, \ldots, \theta_p),$  and observe that  $\etab \equiv  (\tilde{\mm}^{\trasp} \circ \bm{u} \,  + \bm{\zeta}  Q)  D_{\theta}^{1/2} Q^{\trasp}$ with $\tilde{\mm}^{\trasp}=\mm^{\trasp} Q D_{\theta}^{-1/2}$ from Example  \ref{ex22}.  As $f\left(\bm{\zeta} Q^{\trasp}, \bm{z} \right)  = \exp (\frac{1}{2} \bm{z} \bm{z}^{\trasp})$ then $\etab \equiv (\tilde{\mm}^{\trasp} \circ \bm{u} \,  + \tilde{\bm{\zeta}})  D_{\theta}^{1/2} Q^{\trasp}$ 
with $\tilde{\bm{\zeta}}$ a $p$-tuple of distinct $\delta$-exponential umbrae. Thus $\etab \etab^{\trasp} \equiv
\sum_{i=1}^p \theta_i (\tilde{m}_{i} u_i + \tilde{\zeta}_{i})^2$
and $f (\etab \etab^{\trasp}, z) = \prod_{i=1}^p f[(\tilde{m}_{i} u_i + \tilde{\zeta}_{i})^2, \theta_i z] = \exp \big( \frac{1}{2} \sum_{k \geq 1} \frac{2^k z^k}{k} \sum_{i=1}^p   \theta_i^k \left( 1 +  k \tilde{m}_{i}^2 \right) \big).$ Last equality follows from Example \ref{ex2.1}, using the uncorrelation property. As
$\sum_{i=1}^p   \theta_i^k \left( 1 +  k \tilde{m}_{i}^2 \right) = \Tr(D_{\theta}^k) + k \Tr( \tilde{\mm}^{\trasp} D_{\theta}^k  \tilde{\mm}^{\trasp})$ the g.f. $f(\etab \etab^{\trasp},z)$ matches the function on the r.h.s. of \eqref{(fsr)}.} 
\end{ex}
\begin{lemma}
If $(Y_1, \ldots, Y_p)$ is represented by the $p$-tuple $\nub,$ then 
$$i! \E[e_i(Y_1, \ldots, Y_p)] = E[p_i(\chi_1, \ldots, \chi_p)]$$ where  $p_i(y_1, \ldots, y_p)$ is the $i$-th coefficient of  $E[\exp\big( (y_1 \nu_1 + \cdots + y_p \nu_p) z \big)].$
\end{lemma}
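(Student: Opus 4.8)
The plan is to compute $p_i$ in closed form, insert the singleton umbrae, and recognise the elementary symmetric function directly. First I would expand the generating function
\[
E\!\left[\exp\big((y_1\nu_1+\cdots+y_p\nu_p)z\big)\right]=\sum_{i\ge0}\frac{z^i}{i!}\,E\!\left[(y_1\nu_1+\cdots+y_p\nu_p)^i\right];
\]
in the umbral normalization of \eqref{(genfun)} the $i$-th coefficient is the one multiplying $z^i/i!$, so $p_i(y_1,\dots,y_p)=E[(y_1\nu_1+\cdots+y_p\nu_p)^i]$. Expanding the $i$-th power by the multinomial theorem in the polynomial ring and applying $E$ componentwise gives
\[
p_i(y_1,\dots,y_p)=\sum_{|\ibs|=i}\frac{i!}{\ibs!}\,E[\nub^{\ibs}]\,\bm{y}^{\ibs}=\sum_{|\ibs|=i}\frac{i!}{\ibs!}\,g_{\ibs}\,\bm{y}^{\ibs},
\]
where $\bm{y}^{\ibs}=y_1^{i_1}\cdots y_p^{i_p}$ and $g_{\ibs}=E[\nub^{\ibs}]=\E[Y_1^{i_1}\cdots Y_p^{i_p}]$ is the joint moment supplied by the hypothesis that $\nub$ represents $(Y_1,\dots,Y_p)$. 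Here it is essential to keep $g_{\ibs}$ as a single joint moment and not to factor it, since the components $\nu_1,\dots,\nu_p$ are in general related.

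Next I would substitute $y_j\mapsto\chi_j$ and evaluate. Because $\chi_1,\dots,\chi_p$ are distinct, hence uncorrelated, property (ii) gives $E[\chi_1^{i_1}\cdots\chi_p^{i_p}]=\prod_{j=1}^p E(\chi_j^{i_j})$; and since $f(\chi,z)=1+z$ forces $E(\chi_j^{k})=0$ for every $k\ge2$ while $E(\chi_j^{0})=E(\chi_j^{1})=1$, this product equals $1$ when all exponents $i_j\in\{0,1\}$ and $0$ otherwise. Thus applying $E$ to $p_i(\chi_1,\dots,\chi_p)$ annihilates every monomial carrying a repeated index: the surviving multi-indices are exactly the indicator vectors of the $i$-subsets $\{j_1<\cdots<j_i\}$ of $\{1,\dots,p\}$, for each of which $\ibs!=1$, $i!/\ibs!=i!$, and $g_{\ibs}=\E[Y_{j_1}\cdots Y_{j_i}]$. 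Hence
\[
E[p_i(\chi_1,\dots,\chi_p)]=i!\!\!\sum_{1\le j_1<\cdots<j_i\le p}\!\!\E[Y_{j_1}\cdots Y_{j_i}]=i!\,\E\!\Big[\sum_{1\le j_1<\cdots<j_i\le p} Y_{j_1}\cdots Y_{j_i}\Big]=i!\,\E[e_i(Y_1,\dots,Y_p)]
\]
by linearity of $\E$ and the definition \eqref{(elfun)} of $e_i$; the case $i>p$ gives $0$ on both sides, consistently.

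I expect the computation itself to be routine; the conceptual heart — and the only delicate point — is the annihilation step, namely that inserting the singleton umbrae deletes precisely the monomials with a repeated index and keeps only the squarefree ones that assemble $i!\,e_i$, exactly the mechanism already recorded in \eqref{(efsy1)}. In fact the lemma can be read as an interchange in the order of evaluation of the two unrelated alphabets $\{\chi_j\}$ and $\{\nu_j\}$ in $E[(\chi_1\nu_1+\cdots+\chi_p\nu_p)^i]$: carrying out the $\nu$-evaluation first produces $E[p_i(\chi_1,\dots,\chi_p)]$, whereas carrying out the $\chi$-evaluation first turns the inner power into $i!\,e_i(\nu_1,\dots,\nu_p)$ via \eqref{(efsy1)}, whose evaluation returns $i!\,\E[e_i(Y_1,\dots,Y_p)]$. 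The mild obstacle is to justify this interchange, which I would reduce to the uncorrelation property (ii) together with the care, noted above, of treating $g_{\ibs}$ as a joint moment whenever the $\nu_j$ are related.
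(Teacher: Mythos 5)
Your proof is correct and takes essentially the same route as the paper: the paper's own proof is precisely the interchange you describe in your closing paragraph, namely applying \eqref{(efsy1)} with $\nu_1,\ldots,\nu_p$ plugged into $y_1,\ldots,y_p$ and then using the fact that the joint moments of $\nub$ coincide with those of $(Y_1,\ldots,Y_p)$. Your multinomial expansion and singleton-annihilation computation simply unpacks the mechanism behind \eqref{(efsy1)} explicitly, so your version is a more self-contained rendering of the paper's two-line argument rather than a different one.
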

\begin{proof}
 Indeed from \eqref{(efsy1)} we have $E[e_i(\nu_1, \ldots, \nu_p)]=E[p_i(\chi_1, \ldots, \chi_p)],$ 
that gives $\E[e_i(Y_1, \ldots, Y_p)]$ since the joint moments of $\nub$ are equal to the joint moments of $(Y_1, \ldots, Y_p).$
\end{proof}
\vskip2pt \indent
From the previous Lemma, a way to recover $\E \left[ \Tr_i(W) \right]\!$ is the following: compute  the $i$-th coefficient of  $\exp (y_1 \nu_1 + \cdots + y_p \nu_p) z,$ where the $p$-tuple $(\nu_1, \ldots, \nu_p)$ represents the latent roots of 
$W;$ plug $\{\chi_i\}$ in place of $\{y_i\}$ and evaluate the corresponding umbral polynomial  through $E.$  As remarked in the introduction, the m.g.f. of the latent roots of $W$ has a cumbersome expression to be expanded in formal power series \cite{Smith} and thus we recover  $\E \left[ \Tr_i(W) \right]$ following a different way. 

\section{Umbral matrices\label{sec:4}}
Let us consider a sequence $\{g_{\ibs_1, \ldots, \ibs_n}\}$ indexed by $n$  multi-indexes $\ibs_1, \ldots, \ibs_n \in {\mathbb N}^p_0$ and umbrally represented by a $n$-tuple ${\bm{\mathcal V}} = (\nub_1, \ldots, \nub_n)$ of umbral $p$-tuples. Paralleling (\ref{(genmult)}), the g.f. of ${\bm{\mathcal V}}$ is 
$$
 f \big({\bm{\mathcal V}};\bm{z}_1, \ldots, 
\bm{z}_n \big) = \sum_{k \geq 0} \sum_{|\ibs_1|+ \cdots + |\ibs_n|=k} g_{\ibs_1, \ldots, \ibs_n} \frac{\bm{z}_1^{\ibs_1} \times \cdots \times \bm{z}_n^{\ibs_n}}{\ibs_1! \times \cdots
\times \ibs_n!} \in {\mathbb R}[[\bm{z}_1, \ldots, \bm{z}_n]],
$$
with $\bm{z}_j = (z_{1j}, \ldots, z_{pj})$ for $j=1, \ldots,n.$ 
Definitions \ref{2.2} (similarity) and \ref{2.3} (unrelation) are naturally extended to $n$-tuples of umbral $p$-tuples.
\begin{ex}[Normal umbral $pn$-tuples] \label{ex3.7}  {\rm Let ${\rm vec}_{[p \times n]}$ be the operator representing a $p \times n$-matrix as a $pn$-vector formed by putting columns underneath starting with the first. In the following, we omit the subscript ${p \times n}$ in ${\rm vec}_{[p \times n]}$ when there are no misunderstandings. The $p \times n$-matrix variate normal distribution  ${\mathsf X} \sim \nor_{p,n}(M, \Sigma, \Psi)$ has m.g.f. \cite{Kollo}
\begin{equation}
M_{{\mathsf X}}(Z) = M_{{\rm vec}({\mathsf X})}(Z) = \exp \left[ \hbox{\rm vec}^{\trasp}(M) \hbox{\rm vec}(Z) \!  + \frac{1}{2} \hbox{\rm vec}^{\trasp}(Z)  (\Psi \otimes \Sigma) \hbox{\rm vec}(Z) \right]
\label{mgfnorm1}
\end{equation}
where  $\otimes$ denotes the Kronecker product, that is 
$\Psi \otimes \Sigma$ is the $pn \times pn$-matrix of  $p \times p$ block submatrices $[\Psi_{ij} \Sigma],$ for $i = 1, \ldots , n$ and $j = 1, \ldots, n.$ From Example \ref{ex22}, ${\rm vec}({\mathsf X}) \sim \nor_{pn}(\hbox{\rm vec}(M), \Psi \otimes \Sigma)$  is represented by the umbral $pn$-tuple  $\hbox{\rm vec}^{\trasp}(M) \circ  (\bm{u}_1, \ldots,\bm{u}_n)   + (\bm{\zeta}_1, \ldots, \bm{\zeta}_n) (\Psi^{1/2} \otimes \Sigma^{1/2}),$ where $\bm{u}_1, \ldots, \bm{u}_n$ are unrelated $p$-tuples of unity umbrae  and $\bm{\zeta}_1, \ldots,\bm{\zeta}_n$ are unrelated $p$-tuples of $\delta$-exponential umbrae. If $\Psi=I_n$ then ${\rm vec}({\mathsf X})$ is represented by $(\etab_1, \ldots, \etab_n),$ with $\etab_j \equiv \mm_j^{\trasp} \circ \bm{u}_j \,  + \bm{\zeta}_j \Sigma^{1/2}$
unrelated normal umbral $p$-tuples for $j=1, \ldots,n.$}
\end{ex}
\begin{defn} \label{fund}
A  $p \times n$-umbral matrix  is ${\mathcal V} = {\rm vec}^{{\scriptscriptstyle -1}}[ (\nub_1^{\trasp}, \ldots, \nub_n^{\trasp})^{\trasp}].$ 
\end{defn}
If $\hbox{\rm etr}( \cdot) = \exp \big[ \Tr ( \cdot) \, \big]$ and $Z={\rm vec}^{{\scriptscriptstyle -1}} [(\bm{z}_1^{\trasp}, \ldots,\bm{z}_n^{\trasp})^{\trasp}],$ then
$$
f(\bm{\mathcal V};  \bm{z}_1, \ldots, 
\bm{z}_n) = E \big\{ \exp \big[ \hbox{\rm vec}^{\trasp}({{\mathcal V}})  {\rm vec}(Z) \big] \big\} = E \left[ \hbox{\rm etr} \big( {{\mathcal V}}^{\trasp} Z\big) \right]
$$
and we set 
$$
f({\mathcal V},Z) = E \left[ \hbox{\rm etr} \big( {\mathcal V}^{\trasp} Z\big) \right].$$
If $V$ is a random matrix with m.g.f. $\E \left[ \hbox{\rm etr} \big( V^{\trasp} Z\big) \right] = M_V(Z),$  then $V$ is represented by the umbral matrix ${\mathcal V}$ with g.f. $f({\mathcal V},Z) = M_V(Z).$
\begin{ex}[Normal umbral matrix]\label{4.3}{\rm
 From Example \ref{ex3.7}, the matrix variate ${\mathsf X} \sim \nor_{p,n}(M, \Sigma, \Psi)$  is represented by a
$p \times n$-umbral matrix ${\mathcal X}$ having g.f. \eqref{mgfnorm1} and named normal umbral matrix. We write ${\mathcal X} \equiv \nor_{p,n}(M, \Sigma, \Psi).$ It's straightforward  to prove that $M \circ {\mathcal U} + \Sigma^{1/2}  {\mathcal Z} \Psi^{1/2}$ is a normal umbral 
matrix where ${\mathcal U}={\rm vec}^{-1} \left[(\bm{u}_1^{\trasp}, \ldots,\bm{u}^{\trasp}_n)^{\trasp} \right]$ and ${\mathcal Z}={\rm vec}^{-1}\left[(\bm{\zeta}_1^{\trasp}, \ldots, \bm{\zeta}_n^{\trasp})^{\trasp} \right].$} 
\end{ex}
\begin{ex}[E.s.f. and the singleton umbral matrix]
 Let us consider the $i$-th e.s.f. $e_i(\theta_1, \ldots, \theta_p)$ in the eigenvalues of $\Sigma$ and $D_{\theta} = {\rm diag}(\theta_1, \ldots, \theta_p).$ Then $\Tr_i(\Sigma) = E[\Tr({\mathfrak X}_p D_{\theta})^i]$ for $i \leq p,$ where ${\mathfrak X}_p = {\rm diag} (\chi_1, \ldots, \chi_p)$ is a diagonal singleton umbral matrix. As $E[\det({\mathfrak X}_p)]=1,$  notice that $E[\Tr({\mathfrak X}_p D_{\theta})^p] = \det(\Sigma) =  E[ \det({\mathfrak X}_p \Sigma)] = E[\Tr({\mathfrak X}_p
 \Sigma)^p].$ For $i \leq p,$ as ${\mathfrak X}_p \equiv \Delta_p^2$  with $\Delta_p={\rm diag}(\delta_1, \ldots, \delta_p)$ a diagonal delta umbral matrix, we have 
$$\Tr_i(\Sigma) = E[\Tr(\Delta_p D_{\theta} \Delta _p)^i] = 
E[\Tr(\Delta_p Q^{\trasp} \Sigma Q \Delta_p )^i] =  E[\Tr(Q_{\delta} \Sigma Q_{\delta}^{\trasp})^i]$$
where $Q_{\delta}=\Delta_p Q^{\trasp}.$ Notice that
$Q_{\delta} Q_{\delta}^{\trasp} \equiv {\mathfrak X}_p.$  
\end{ex}
\begin{ex}[E.s.f. and the Wishart umbral matrix]\label{4.4}{\rm 
The Wishart random matrix $W$ has m.g.f. \cite{Gupta} 
$$
{\mathcal M}_{W}(Z) =  \det \big(I_{p} - 2 \, Z  \Sigma\big)^{-n/2}  \hbox{\rm etr} \bigg[ (I_{p} - 2 Z \Sigma)^{-1} Z \Omega \bigg],$$
where  $Z$ is a $p \times p$ parametric matrix such that $Z_{ij} = \frac{1}{2} z_{ij}$ for $i \ne j$ and $z_{ij} = z_{ji} $ for $i,j \in \{1, \ldots, p\}.$ A formal power series expansion of ${\mathcal M}_{W}(Z)$ is given in Theorem 7.8.1. of \cite{Gupta}. By using the same arguments of Example \ref{ex3.4}, this formal power series results to be the g.f. of ${\mathcal X}{\mathcal X}^{\trasp},$ with ${\mathcal X} \equiv M \circ {\mathcal U} + \Sigma^{1/2}  {\mathcal Z} \Psi^{1/2}$. Then ${\mathsf X}{\mathsf X}^{\trasp}$ is represented by the Wishart umbral matrix ${\mathcal X}{\mathcal X}^{\trasp}.$  Suppose to represent the singular values of ${\mathsf X }$ with the umbral $p$-tuple $\nub=(\nu_1, \ldots, \nu_p)$ and with ${\mathcal V}$ the $p \times n$ rectangular diagonal matrix containing  $\nu_1, \ldots, \nu_p$ in the elements with equal indices. Thus $\nu_1^2, \ldots, \nu_p^2$ represent the latent roots of ${\mathsf X}{\mathsf X}^{\trasp}$ and $\Tr ({\mathfrak X}_p {\mathcal V}^2) \equiv \Tr(\Delta_p {\mathcal V} \tilde{\Delta}_n)(\Delta_p {\mathcal V} \tilde{\Delta}_n)^{\trasp},$ where $\Delta_p = {\rm diag}(\delta_1, \ldots, \delta_p)$ and $\tilde{\Delta}_n = {\rm diag}(\tilde{\delta}_1, \ldots, \tilde{\delta}_n)$ are diagonal matrices of uncorrelated delta umbrae such that 
$\delta_i^2 \equiv \chi_i$ for $i=1, \ldots,p$ and $\tilde{\delta}_i^2 \equiv \tilde{\chi}_i$ for $i=1, \ldots,n.$  This last formula has suggested the introduction of the polynomial trace umbra in order to recover  $\E[\Tr_i(W)].$}
\end{ex}
\subsection{Polynomial trace umbra}
Let us consider two sets of indeterminates $y_1, \ldots, y_p$ and $x_1, \ldots, x_n,$ and the evaluation operator $E$  defined on ${\mathbb R}[y_1, \ldots, y_p; x_1, \ldots, x_n][{\mathcal A}]$ such that
$$
E[y_{i_1}^{l_1} y_{i_2}^{l_2} \times  \cdots \times x_{j_1}^{k_1} x_{j_2}^{k_2} \times \cdots \times \nu^{m_1} \mu^{m_2} \times \cdots] = y_{i_1}^{l_1} y_{i_2}^{l_2} \times \cdots \times x_{j_1}^{k_1} x_{j_2}^{k_2} \times \cdots \times E[\nu^{m_1} \mu^{m_2} \times \cdots] 
$$
for all $\nu, \mu, \ldots \in {\mathbb R}[{\mathcal A}],$ for all 
nonnegative integers $l_1,l_2, \ldots, k_1,k_2, \ldots,$ $m_1,m_2, \ldots$ and  $i_1,i_2,\ldots \in \{1, \ldots,p\},  j_1,j_2,\ldots  \in \{1, \ldots,n\}.$ For shortness, we denote the two set of indeterminates with $\yp$ and $\xn$ respectively.
\begin{defn}
If ${\mathcal X} \equiv \nor_{p,n}(M, \Sigma, I_n),$ then 
$\Tr[(D_y {\mathcal X} D_x)(D_y {\mathcal X} D_x)^{\trasp}]$ is
the Wishart polynomial trace umbra. 
\end{defn}
This definition is well suited for quadratic forms and might be extended to different umbral matrices. As example, if $n=p$  we can consider the polynomial trace umbrae $\Tr(D_y {\mathcal V} D_x)$ or in one set of indeterminates $\Tr(D_y {\mathcal V}).$

The moments of the  Wishart polynomial trace umbra form a sequence of complete Bell polynomials as proved in the following theorem. 
\begin{thm} \label{thm1}  If ${\mathcal X} \equiv {\mathcal N}_{p,n}(M, \Sigma, I_n),$ then
$$
E\left\{ [\Tr(D_y {\mathcal X} D_x)(D_y {\mathcal X} D_x)^{\trasp}]^i\right\} = B_i \left( c_1\big(\yp,\xn \big), \ldots, c_i \big(\yp,\xn\big)\right), \,\,\, i \geq 1
$$
where $B_i$ is the $i$-th complete Bell polynomial, $c_0\big(\yp,\xn \big) =1$ and $c_k\big(\yp,\xn \big) = q_k\big(\yp,\xn \big) + \tilde{q}_k\big(\yp,\xn \big),$ for  $k \geq 1$ with
\begin{align}
q_k\big(\yp,\xn \big) & = (k-1)! \, 2^{k-1} \, \Tr[(D_x^2 \otimes \,  D_y^2 D_{\theta})^k]  \label{cum1} \\
\tilde{q}_k\big(\yp,\xn \big) & = \left\{ \begin{array}{ll}
{\rm vec}^{\trasp}(\tilde{M}) {\rm vec}(\tilde{M}), & k=1, \\
 k! \, 2^{k-1} \hbox{\rm vec}^{\trasp}(\tilde{M}) (D_x^2 \otimes  \tilde{\Sigma})^{k-1}  \hbox{\rm vec}(\tilde{M}),  &  k > 1
 \end{array} \right. \label{cum2}
\end{align}
$\tilde{M} = D_y M D_x$ and $\tilde{\Sigma}= D_y \Sigma D_y.$
\end{thm}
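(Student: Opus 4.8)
The plan is to read the claimed identity as the statement that the quantities $c_k=q_k+\tilde q_k$ are precisely the formal cumulants of the polynomial trace umbra $T=\Tr[(D_y{\mathcal X}D_x)(D_y{\mathcal X}D_x)^{\trasp}]$. Indeed, by the exponential formula relating the complete Bell polynomials to a power series, if the g.f.\ of $T$ satisfies $\log f(T,z)=\sum_{k\ge 1}c_k\,z^k/k!$, then $E[T^i]=B_i(c_1,\dots,c_i)$ for every $i\ge 1$, since $\exp(\sum_k c_k z^k/k!)=\sum_i B_i(c_1,\dots,c_i)z^i/i!$. Hence it suffices to compute $f(T,z)=E[\exp(zT)]$, take its formal logarithm, and identify the coefficient of $z^k/k!$ with $c_k$. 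The whole proof is therefore a generating-function computation followed by the extraction of cumulants.

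First I would rewrite the matrix inside the trace. Writing ${\mathcal Y}=D_y{\mathcal X}D_x$ and using the representation ${\mathcal X}\equiv M\circ{\mathcal U}+\Sigma^{1/2}{\mathcal Z}$ of Example \ref{4.3}, one checks that $D_y(M\circ{\mathcal U})D_x=\tilde M\circ{\mathcal U}$ with $\tilde M=D_yMD_x$, so ${\mathcal Y}\equiv \nor_{p,n}(\tilde M,\tilde\Sigma,D_x^2)$ with $\tilde\Sigma=D_y\Sigma D_y$: the row covariance is $\tilde\Sigma$ and the column covariance is $D_x^2$. The decisive observation is that the column covariance $D_x^2$ is diagonal, so the $n$ columns $\etab_1,\dots,\etab_n$ of ${\mathcal Y}$ are \emph{unrelated} umbral $p$-tuples, with $\etab_j\equiv\nor_p(\tilde M_{\cdot j},x_j^2\tilde\Sigma)$ and $\tilde M_{\cdot j}=x_jD_y\mm_j$. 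Since $T=\Tr({\mathcal Y}{\mathcal Y}^{\trasp})=\sum_{j=1}^n\etab_j^{\trasp}\etab_j$ is a sum of unrelated umbrae, the convolution property gives $f(T,z)=\prod_{j=1}^nf(\etab_j^{\trasp}\etab_j,z)$, whence $\log f(T,z)=\sum_{j=1}^n\log f(\etab_j^{\trasp}\etab_j,z)$. This is exactly the additivity of cumulants under convolution anticipated in the introduction, and it reduces the task to a single column.

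Each column is a generalized non-central chi-squared umbra, so I would invoke \eqref{(fsr)} of Example \ref{ex3.4} with covariance $x_j^2\tilde\Sigma$ and mean $\tilde M_{\cdot j}$, obtaining for the $j$-th factor the log-g.f.\ $\frac12\sum_{k\ge1}\frac{2^kz^k}{k}\big(x_j^{2k}\Tr(\tilde\Sigma^k)+k\,x_j^{2(k-1)}\Tr(\tilde\Sigma^{k-1}\tilde M_{\cdot j}\tilde M_{\cdot j}^{\trasp})\big)$. Summing over $j$ and using $\sum_jx_j^{2k}=\Tr(D_x^{2k})$ together with $\sum_jx_j^{2(k-1)}\tilde M_{\cdot j}\tilde M_{\cdot j}^{\trasp}=\tilde M D_x^{2(k-1)}\tilde M^{\trasp}$ collapses the sum into traces in the full $p\times n$ data. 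Multiplying the $z^k$-coefficient by $k!$ then yields $c_k$, split into a variance part $(k-1)!\,2^{k-1}\Tr(D_x^{2k})\Tr(\tilde\Sigma^k)$ and a mean part $k!\,2^{k-1}\Tr(\tilde\Sigma^{k-1}\tilde MD_x^{2(k-1)}\tilde M^{\trasp})$.

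The final step is a purely algebraic repackaging into Kronecker form. For the mean part I would apply the identities $(B^{\trasp}\otimes C)\mathrm{vec}(A)=\mathrm{vec}(CAB)$ and $\mathrm{vec}^{\trasp}(A)\mathrm{vec}(B)=\Tr(A^{\trasp}B)$ to recognize $\Tr(\tilde\Sigma^{k-1}\tilde MD_x^{2(k-1)}\tilde M^{\trasp})=\mathrm{vec}^{\trasp}(\tilde M)(D_x^2\otimes\tilde\Sigma)^{k-1}\mathrm{vec}(\tilde M)$, which is \eqref{cum2} (the case $k=1$ reducing to $\mathrm{vec}^{\trasp}(\tilde M)\mathrm{vec}(\tilde M)$); for the variance part, $\Tr(D_x^{2k})\Tr(\tilde\Sigma^k)=\Tr[(D_x^2\otimes\tilde\Sigma)^k]$, and the cyclic identity $\Tr((D_y\Sigma D_y)^k)=\Tr((D_y^2\Sigma)^k)$ followed by the spectral reduction $\Sigma=QD_\theta Q^{\trasp}$ of Example \ref{ex3.4} brings it into the $D_y^2D_\theta$ shape of \eqref{cum1}. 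I expect the main obstacle to be precisely this bookkeeping—tracking the factors $2^{k-1}$, $(k-1)!$ versus $k!$, and the shift from $k$ to $k-1$ in the mean term—together with the vec/Kronecker rewriting needed to assemble the per-column contributions cleanly into \eqref{cum1}–\eqref{cum2}; the probabilistic content is entirely carried by the convolution step and by \eqref{(fsr)}.
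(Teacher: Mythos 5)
Your route is essentially the paper's own. The paper obtains the generating function of the polynomial trace from Theorem 7.8.2 of \cite{Gupta}, which is exactly your column-wise convolution argument (the columns of $D_y {\mathcal X} D_x$ are unrelated normal $p$-tuples, so the g.f. factorizes and the log-g.f.'s add), and then reads off the coefficients and invokes the exponential formula $\exp\big(\sum_{k \geq 1} c_k z^k/k!\big)=\sum_{i \geq 0} B_i(c_1,\dots,c_i)\,z^i/i!$, just as you do. Your vec/Kronecker repackaging of the non-central part, $\Tr\big(\tilde\Sigma^{k-1}\tilde M D_x^{2(k-1)}\tilde M^{\trasp}\big)={\rm vec}^{\trasp}(\tilde M)(D_x^2\otimes\tilde\Sigma)^{k-1}{\rm vec}(\tilde M)$, is correct and is precisely how the paper passes from \eqref{5.5} to \eqref{cum2}.

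The gap is your last step. The claimed \emph{spectral reduction} $\Tr\big[(D_y^2\Sigma)^k\big]=\Tr\big[(D_y^2 D_\theta)^k\big]$ is not an identity: writing $\Sigma=Q D_\theta Q^{\trasp}$, the diagonal matrix $D_y^2$ does not commute with $Q$, so the conjugation cannot be absorbed. Already for $k=1$ one has $\Tr(D_y^2\Sigma)=\sum_i y_i^2\Sigma_{ii}$ while $\Tr(D_y^2 D_\theta)=\sum_i y_i^2\theta_i$, and these are different polynomials in $y_1,\dots,y_p$ whenever the diagonal of $\Sigma$ differs from its spectrum, i.e.\ whenever $\Sigma$ is not diagonal. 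What your computation actually proves is the theorem with $\Tr[(D_x^2\otimes D_y\Sigma D_y)^k]$ in \eqref{cum1}. You should be aware, however, that the paper's own proof makes exactly the same unjustified move: using $D_\theta$ in \eqref{5.4bis} is legitimate because a determinant depends only on eigenvalues, but substituting $\Sigma\mapsto D_y\Sigma D_y$, $M\mapsto D_y M$ into \eqref{5.4bis} and writing $D_y^2 D_\theta$ in \eqref{4.12} is not, since $D_y\Sigma D_y$ and $D_y^2 D_\theta$ are not similar in general. So your derivation is in fact the correct one, and the statement as printed (together with its later use in Theorem \ref{6.2}, where $\Tr(\Delta_p^2 D_\theta)\equiv\chi_1\theta_1+\cdots+\chi_p\theta_p$ is needed to produce $\Tr_i(\Sigma)$) holds only when $\Sigma$ is diagonal. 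That case suffices for the intended applications, because $\E[\Tr_i(W)]$ is invariant under $W\mapsto Q^{\trasp}W Q$ and one may therefore assume $\Sigma=D_\theta$ from the outset; but neither your final step nor the paper's is valid for a general non-diagonal $\Sigma$, so as a proof of the statement as literally written your argument (like the paper's) is incomplete without that reduction.
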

\noindent
\begin{proof}
From Theorem 7.8.2 of \cite{Gupta} with $A$ replaced by  $D_x^2={\rm diag}(x_1^2, \ldots, x_n^2)$ and
parametric matrix $z I_p,$ we have $f({\mathcal X } D_x^2 {\mathcal X }^{\trasp}, z)$ $= g_1(z) \exp g_2(z)$ where
by using the first of \eqref{(omsum1)}  
\begin{equation}
 g_1(z)  =  \prod_{j=1}^n \det(I_p  - 2 x_j^2 z D_{\theta})^{-1/2}  = {\rm etr} \bigg(\sum_{k \geq 1} 2^{k-1} \frac{(D_x^2 \otimes  z D_{\theta})^k}{k} \bigg) 
 \label{5.4bis} 
\end{equation}
and by using the second of \eqref{(omsum1)} and by observing that 
$[I_{np} - 2 (D_x^2 \otimes  z \Sigma)]^{-1} = 
{\rm diag} [(I_p - 2 x_1^2 z \Sigma)^{-1}, \ldots, (I_p - 2 x_n^2 z \Sigma)^{-1}]$ we have
\begin{align}
 g_2(z) & = \sum_{k \geq 0} 2^k z^{k+1} {\rm vec}^{\trasp}(M D_x) ( D_x^2 \otimes   \Sigma)^k {\rm vec}( M D_x) \\ 
 & = z  {\rm vec}^{\trasp}(M D_x) {\rm vec}(M D_x) + \sum_{k \geq 1} 2^k z^{k+1} {\rm vec}^{\trasp}(M D_x) ( D_x^2 \otimes   \Sigma)^k {\rm vec}(M D_x).  \label{5.5}
\end{align}
 Moreover $D_y {\mathcal X}  \equiv \nor_{p,n}(D_y M, D_y \Sigma D_y, I_n)$ and, taking into account  \eqref{5.4bis} and \eqref{5.5},  we have 
 $$f \big( \Tr[(D_y {\mathcal X} D_x)(D_y {\mathcal X} D_x)^{\trasp}],z \big) = \exp [g(z)-1]$$ with 
\begin{align}
g(z)  =  1 & + z [\Tr(D^2_x \otimes D_y^2 D_{\theta}) + {\rm vec}^{\trasp}(\tilde{M}) {\rm vec}(\tilde{M})] \nonumber \\
& + \sum_{k \geq 2} 2^{k-1} z^k \left( \frac{\Tr(D^2_x \otimes D_y^2 D_{\theta})^k}{k} + {\rm vec}^{\trasp} (\tilde{M}) (D_x^2 \otimes \tilde{\Sigma})^{k-1}  {\rm vec}(\tilde{M})  \right) \label{4.12}
\end{align}
with $\tilde{M}=D_y M D_x$ and $\tilde{\Sigma} = D_y \Sigma D_y.$ Therefore $g(z)$ is a formal power series with the $k$-th coefficient $c_k=c_k\big(\yp,\xn\big)$ given in \eqref{cum1} and \eqref{cum2}. The result follows as the $i$-th coefficient of $\exp[g(z)-1]$ is the $i$-th complete Bell  polynomial  $B_i$ in $(c_1, \ldots, c_i).$ 
\end{proof}
\begin{ex}{\rm For example, for $n=3$ and $p=2$ we have
$E\left\{ \Tr[(D_y {\mathcal X} D_x)(D_y {\mathcal X} D_x)^{\trasp}]\right\}  = 
\Tr(D_x^2 \otimes \,  D_y^2 D_{\theta}) + \Tr 
(\tilde{M}^{\trasp} \tilde{M}) = c_1 \big( {\ynn},  \xnn \big)$ with 
\begin{align*}
 c_1 \big(\ynn, \xnn \big)  = & 
 (x_1^2 y_1^2 + x_2^2 y_1^2 + x_3^2 y_1^2) \theta_{1}  +(x_1^2 y_2^2 + x_2^2 y_2^2 + x_3^2 y_2^2) \theta_{2} +
 y_1^2 (m_{11}^2 x_1^2  + m_{12}^2
x_2^2 + m_{13}^2 x_3^2)  \\ 
 & + y_2^2 (m_{2 1}^{2} x_1^2 +  m_{22}^2 x_2^2 + m_{23}^2 x_3^2).
\end{align*}
}
\end{ex}

Note that $g(z)-1$ in \eqref{4.12} is the cumulant g.f. of $\Tr(D_y {\mathcal X} D_x)(D_y {\mathcal X} D_x)^{\trasp}.$ Thus $\{c_k\big(\yp,\xn\big) \}_{k \geq 1}$ is the sequence of formal cumulants of the  Wishart polynomial trace umbra and an extension of the cumulant polynomials introduced in \cite{CumDin}.  As $\{c_k\big(\yp,\xn\big)\}_{k \geq 1}$ is the summation of two sequences, from the additivity property of cumulants, the Wishart polynomial trace umbra is the sum of two polynomial umbrae. In particular, $q_k\big(\yp,\xn\big)$ is the $k$-th formal cumulant of the central Wishart polynomial trace corresponding to $M=0.$ The same property  has been highlighted and discussed in \cite{Wishartmio} for the Wishart random matrix. 
\section{From the polynomial trace umbra to the e.s.f.'s\label{sec:5}}
The idea to use polynomial trace umbrae to recover $\E[\Tr_i(W)]$ relies on the following observation. As ${\mathsf X } \sim {\mathcal N}_{p,n}(0, I_p, I_n)$ is bi-unitary invariant \cite{Tulino}, then $E \big( [\Tr(\Delta_p {\mathcal X} \tilde{\Delta}_n) (\Delta_p {\mathcal X} \tilde{\Delta}_n)^{\trasp}]^i \big)= E \big( [\Tr(\Delta_p {\mathcal V} \tilde{\Delta}_n)$ $(\Delta_p {\mathcal V} \tilde{\Delta}_n)^{\trasp}]^i \big) = i! \E [\Tr_i(W)].$ 
 Therefore we might recover $i! \E[\Tr_i(W)]$ from the moments of the polynomial trace umbra $E [\Tr(D_y {\mathcal X} D_x) (D_y {\mathcal X} D_x)^{\trasp}]^i$ plugging $\deltap$
and $\deltan$ in $\yp$ and $\xn$ respectively. The following theorems give sufficient conditions on  ${\mathcal X }$ such that
\begin{equation}
i! \E[\Tr_i(W)] =  E \left\{ [\Tr(\Delta_p {\mathcal X} \tilde{\Delta}_n) (\Delta_p {\mathcal X} \tilde{\Delta}_n)^{\trasp}]^i \right\}
\label{(finale)}
\end{equation} 
still holds for $i \leq p.$
\begin{thm}\label{6.2}
If ${\mathcal X } \equiv {\mathcal N}_{p,n}(0, \Sigma, I_n),$
then \eqref{(finale)} holds with $W=W_p(n, \Sigma,0).$   
\end{thm}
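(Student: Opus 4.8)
The plan is to evaluate the right-hand side of \eqref{(finale)} directly by specialising Theorem~\ref{thm1} to the central case and then performing the evaluation over the delta umbrae. Since $M=0$ we have $\tilde M = D_y M D_x = 0$, so $\tilde q_k\equiv 0$ in \eqref{cum2} and the formal cumulants collapse to $c_k=q_k=(k-1)!\,2^{k-1}\,\Tr[(D_x^2\otimes D_y^2 D_{\theta})^k]$. Because $D_x^2\otimes D_y^2 D_{\theta}$ is diagonal with entries $x_j^2 y_i^2\theta_i$, one has $\Tr[(D_x^2\otimes D_y^2 D_{\theta})^k]=\big(\sum_{j=1}^n x_j^{2k}\big)\big(\sum_{i=1}^p\theta_i^k y_i^{2k}\big)$, so that, using the determinant identity in \eqref{(omsum1)}, the moment generating function of the polynomial trace umbra over the umbrae of $\mathcal X$ factors as
\[
E\!\left[\exp\!\big(z\,\Tr(D_y{\mathcal X}D_x)(D_y{\mathcal X}D_x)^{\trasp}\big)\right]
=\exp\!\Big(\sum_{k\ge1} q_k\frac{z^k}{k!}\Big)
=\prod_{j=1}^{n}\prod_{i=1}^{p}\big(1-2z\,\theta_i\,y_i^2x_j^2\big)^{-1/2}.
\]

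Next I would plug $\deltap$ and $\deltan$ in $\yp$ and $\xn$ and evaluate the remaining umbrae. Since $\Delta_p$ and $\tilde\Delta_n$ are unrelated to the umbrae of $\mathcal X$, the operator $E$ may be applied in stages: evaluating $\mathcal X$ first reproduces the product above with $y_i,x_j$ replaced by $\delta_i,\tilde\delta_j$, after which the delta umbrae are evaluated coefficientwise in $z$. Expanding each factor as $\big(1-2z\theta_i\delta_i^2\tilde\delta_j^2\big)^{-1/2}=\sum_{m\ge0}\frac{(2m)!}{2^m(m!)^2}\,z^m\theta_i^m\delta_i^{2m}\tilde\delta_j^{2m}$ and multiplying over all pairs $(j,i)$, a generic monomial is indexed by a matrix $(m_{ji})$ of nonnegative integers of total $z$-degree $\sum_{j,i}m_{ji}$ and carries the umbral factor $\prod_i\delta_i^{2\sum_j m_{ji}}\prod_j\tilde\delta_j^{2\sum_i m_{ji}}$.

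The heart of the argument is the evaluation of these umbral factors. Because $\delta_i^2\equiv\chi_i$ and $\tilde\delta_j^2\equiv\tilde\chi_j$ are singleton umbrae, $E(\delta_i^{2m})=E(\tilde\delta_j^{2m})=0$ as soon as $m\ge2$, while both equal $1$ for $m\in\{0,1\}$. Hence a monomial survives $E$ only when every row sum $\sum_j m_{ji}$ and every column sum $\sum_i m_{ji}$ is at most $1$; this forces each $m_{ji}\in\{0,1\}$ and the support of $(m_{ji})$ to be a partial matching between the $n$ row indices and the $p$ column indices. For such terms the coefficient $\frac{(2m_{ji})!}{2^{m_{ji}}(m_{ji}!)^2}$ equals $1$, so a matching $S$ contributes exactly $z^{|S|}\prod_{(j,i)\in S}\theta_i$. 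Collecting by the size $r=|S|$, each $r$-subset $I\subseteq\{1,\ldots,p\}$ of eigenvalue (column) indices is realised by precisely $(n)_r$ matchings — choose $r$ of the $n$ rows and a bijection onto $I$ — each carrying $\prod_{i\in I}\theta_i$. Thus the coefficient of $z^r$ after evaluation is $(n)_r\,e_r(\theta_1,\ldots,\theta_p)$, valid for $r\le p$ since $n\ge p$. Multiplying by $r!$ gives $E\{[\Tr(\Delta_p{\mathcal X}\tilde\Delta_n)(\Delta_p{\mathcal X}\tilde\Delta_n)^{\trasp}]^r\}=r!\,(n)_r\,\Tr_r(\Sigma)$, which is $r!\,\E[\Tr_r(W)]$ by the first line of \eqref{(first)}, establishing \eqref{(finale)} for $i\le p$.

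The main obstacle is the combinatorial bookkeeping of this last step: one must verify that the singleton (delta) evaluation collapses the double product exactly onto partial permutation matrices, and that every accompanying numerical coefficient reduces to $1$, so that the clean count $(n)_r\,e_r(\theta)$ emerges. One should also check that for $r>p$ no admissible column set exists, matching the vanishing $e_r=0$ and the truncation already recorded in \eqref{(efsy1)}.
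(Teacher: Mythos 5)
Your proposal is correct, and it reaches \eqref{(finale)} by a route that differs from the paper's in how the final evaluation is organized, though the two share the same skeleton: both specialize Theorem~\ref{thm1} to $M=0$ (so $\tilde q_k\equiv 0$ and $c_k=q_k$), both plug $\deltap,\deltan$ into the indeterminates via the staged-evaluation argument (which you, commendably, justify explicitly through unrelatedness), and both rest on the vanishing $E(\delta^{2m})=0$ for $m\ge 2$. The paper, however, stays at the level of the Bell-polynomial expansion: in $E[B_i(q_1,\ldots,q_i)]$ every partition containing a part $k\ge 2$ is killed because $q_k$ then involves delta powers exceeding $2$, so only $q_1^i$ survives, and $E[q_1^i]$ is evaluated \emph{without any further expansion} via the similarities $\Tr\big(\tilde\Delta^2_n\big)\equiv n\punt\tilde\chi$ and $\Tr\big(\Delta_p^2D_{\theta}\big)\equiv \chi_1\theta_1+\cdots+\chi_p\theta_p$ together with \eqref{(efsy1)}, which identifies the result with the umbral form \eqref{(first1)} of $(n)_i\Tr_i(\Sigma)$. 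You instead re-sum the cumulant series into the product $\prod_{j,i}\big(1-2z\theta_i y_i^2x_j^2\big)^{-1/2}$, expand it monomial by monomial, and verify that the delta evaluation collapses the expansion onto partial permutation matrices, recounting $(n)_r\,e_r(\theta_1,\ldots,\theta_p)$ combinatorially; this is correct (the coefficient $\tfrac{(2m)!}{2^m(m!)^2}$ indeed reduces to $1$ for $m\in\{0,1\}$, and row/column sums $\le 1$ force a matching). What your route buys is self-containedness: it bypasses \eqref{(efsy1)} and the partition bookkeeping, makes the truncation at $r>p$ visible, and exhibits transparently \emph{why} the count $(n)_r e_r$ emerges. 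What it costs is that you redo by hand a count the umbral identities already encode, and — more importantly — the paper's key reduction (\lq\lq only $q_1^i$ survives\rq\rq) transfers verbatim to the non-central case in Theorem~\ref{4.5cor}, whereas your product expansion would become substantially messier there because the factorized form of the g.f. is lost once the $\tilde q_k$ terms enter.
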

\noindent
\begin{proof}
As  $(n)_i = E[(n \punt \tilde{\chi})^i]$ and $i! \Tr_i(\Sigma) = E[(\chi_1 \theta_1 + \cdots + \chi_p \theta_p)^i],$ from the first equation in \eqref{(first)} we have 
\begin{equation} 
i! \E[\Tr_i(W)] = E[(n \punt \tilde{\chi})^i (\chi_1 \theta_1 + \cdots + \chi_p \theta_p)^i ], \quad \hbox{\rm $i \leq p$}.
\label{(first1)}
\end{equation}
From Theorem \ref{thm1},  we have 
$E ([\Tr(\Delta_p {\mathcal X} $ $\tilde{\Delta}_n) (\Delta_p {\mathcal X} \tilde{\Delta}_n)^{\trasp}]^i) = E[B_i(q_1, \ldots,q_i)]$ with $q_k\big(\deltap, \deltan\big)  
=  (k-1)! \, 2^{k-1}  \Tr[(\tilde{\Delta}_n^2 \otimes \,   \Delta_p^2 D_{\theta})^k]$ for $k \geq 1$ since $\tilde{q}_k\big(\deltap, \deltan\big) =0$ for $k \geq 1.$  The explicit expression of $B_i(q_1, \ldots,q_i)$ is  
\begin{equation}
B_i(q_1, \ldots, q_i) = q_1^i + \sum_{k=1}^{i-1} \sum_{\lambda \vdash i \atop l(\lambda)=k} d_{\lambda} q_1^{r_1} \, q_2^{r_2} \times \cdots
\label{(completeBell1)}
\end{equation}
where $\lambda = (1^{r_1} 2^{r_2} \ldots)$ is a partition of $i$ in $l(\lambda)= r_1 + r_2 + \cdots$ positive integers and $d_{\lambda} = i! /(r_1! r_2! \times \cdots (1!)^{r_1} (2!)^{r_2} \times \cdots).$ Note that we write $\lambda \vdash i$ to denote that the partition is referred to the integer $i.$   Taking the evaluation of both sides in \eqref{(completeBell1)}, we have $E\big[\prod_{j} q_j^{r_j}\big] \ne 0 \,\, {\rm iff} \,\, j=1 \,\, {\rm and}\, r_1 = i$ as for $j \geq 2,$ the evaluation involves powers of delta umbrae greater than $2.$  Thus $E[ B_i(q_1, \ldots, q_i)] = E[q_1\big(\deltap, \deltan\big) ^i] = E \{ [\Tr\big(\tilde{\Delta}^2_n \otimes \Delta_p^2 D_{\theta}\big)]^i\} = E[ \Tr\big(\tilde{\Delta}^2_n\big)^i] E[\Tr\big(\Delta_p^2 D_{\theta}\big)^i].$ Note that  $\Tr\big(\tilde{\Delta}^2_n\big) \equiv n \punt \tilde{\chi}$ and $\Tr\big(\Delta_p^2 D_{\theta}\big) \equiv  \chi_1 \theta_1 + \cdots + \chi_p \theta_p.$ Hence  $E \{[\Tr\big(\Delta_p {\mathcal X} \tilde{\Delta}_n\big) \big(\Delta_p {\mathcal X} \tilde{\Delta}_n\big)^{\trasp}]^i\}$  is equal to the rhs of \eqref{(first1)} from which \eqref{(finale)} follows. 
\end{proof} 
\vskip2pt \indent
As corollary, if  $W=W_p(n,  I_p ,0)$ and  ${\mathcal X } \equiv {\mathcal N}_{p,n}(0, I_p, I_n),$ from \eqref{(finale)} one has
\begin{equation}
i!\E[\Tr_i(W)] = E[ (n \punt \tilde{\chi})^i (p \punt \chi)^i] = (n)_i (p)_i \,\,\, \hbox{\rm for} \,\,\, i \leq p \leq n.
\label{remark6.2}
\end{equation} 
\begin{thm} \label{4.5cor}
If ${\mathcal X } \equiv {\mathcal N}_{p,n}(M, \sigma^2 I_p, I_n)$ and $M$ is a $p \times n$ rectangular diagonal matrix containing  $m_1, \ldots, m_p$ in the elements with equal indices, then \eqref{(finale)}  holds with $W=W_p(n, \sigma^2 I_p ,M).$
\end{thm}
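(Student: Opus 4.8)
The plan is to follow the pattern of the proof of Theorem~\ref{6.2}, but now starting from the second line of \eqref{(first)}. First I would specialize Theorem~\ref{thm1} to the present hypotheses. With $\Sigma=\sigma^2 I_p$ the eigenvalue matrix is $D_\theta=\sigma^2 I_p$ and $\tilde\Sigma=D_y\Sigma D_y=\sigma^2 D_y^2$, while the rectangular--diagonal form of $M$ makes $\tilde M=D_y M D_x$ rectangular diagonal with entries $m_i\,(D_y)_{ii}(D_x)_{ii}$. Substituting $\deltap$ into $\yp$ and $\deltan$ into $\xn$ and using both identities $(A\otimes B)^k=A^k\otimes B^k$ and $\Tr(A\otimes B)=\Tr(A)\Tr(B)$, the formal cumulants \eqref{cum1}--\eqref{cum2} collapse to
$$
q_k=(k-1)!\,2^{k-1}\sigma^{2k}\,\Tr(\tilde\Delta_n^{2k})\,\Tr(\Delta_p^{2k}),\qquad
\tilde q_k=k!\,2^{k-1}\sigma^{2(k-1)}\sum_{i=1}^p m_i^2\,\delta_i^{2k}\,\tilde\delta_i^{2k},
$$
so that every monomial appearing in $q_k$ or $\tilde q_k$ with $k\ge 2$ carries some delta umbra raised to a power $2k\ge 4$.

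Next I would reproduce the vanishing mechanism of Theorem~\ref{6.2}. Setting $c_k=q_k+\tilde q_k$ and expanding $B_i(c_1,\dots,c_i)$ via \eqref{(completeBell1)}, every partition $\lambda\vdash i$ other than $(1^i)$ contributes a factor $c_k$ with $k\ge 2$; after expanding that factor as $q_k+\tilde q_k$, the resulting monomial contains a delta umbra to a power $\ge 2k\ge 4$. Since the delta umbrae are uncorrelated and $E(\delta^a)=0$ for $a\ge 3$, the evaluation of such a monomial is zero, whence $E[B_i(c_1,\dots,c_i)]=E[c_1^i]$. In particular a nonzero contribution will force $i\le p\le n$, which is the range asserted for \eqref{(finale)}.

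The core step is the computation of $E[c_1^i]$, where $c_1=\sigma^2\big(\sum_{j=1}^n\tilde\delta_j^2\big)\big(\sum_{i=1}^p\delta_i^2\big)+\sum_{i=1}^p m_i^2\,\delta_i^2\tilde\delta_i^2$. Because each $\delta_i,\tilde\delta_j$ enters $c_1$ only through its square, I would use $\delta_i^2\equiv\chi_i$ and $\tilde\delta_j^2\equiv\tilde\chi_j$ to rewrite $c_1$ as a bilinear expression in the singletons with weight $\sigma^2+m_i^2\,\mathbf{1}(i=j)$ attached to the pair $(i,j)$. Expanding $c_1^i$ and using the singleton property $E(\chi^a)=0$ for $a\ge 2$, only the monomials in which the $p$-indices are pairwise distinct and the $n$-indices are pairwise distinct survive, each of them evaluating to $1$. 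Sorting the expansion of $\prod_{t}\big(\sigma^2+m_{i_t}^2\mathbf{1}(i_t=j_t)\big)$ according to the set $S$ of positions that carry the $m^2$ weight, and counting the admissible injective index assignments, I obtain
$$
E[c_1^i]=\sum_{s=0}^i\binom{i}{s}\,\sigma^{2(i-s)}\,s!\,e_s(m_1^2,\dots,m_p^2)\,(p-s)_{i-s}\,(n-s)_{i-s}.
$$
Simplifying $\binom{i}{s}\,s!\,(p-s)_{i-s}=i!\binom{p-s}{i-s}$ and recalling that, for rectangular diagonal $M$, $\Tr_s(MM^{\trasp})=e_s(m_1^2,\dots,m_p^2)$, this equals $i!\,\E[\Tr_i(W)]$ as read from the second line of \eqref{(first)}, which is \eqref{(finale)}.

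I expect the main obstacle to lie in this last count. One must disentangle the ``off--diagonal'' contribution (weight $\sigma^2$, with row index and column index free and independent) from the ``diagonal'' contribution (weight $m_i^2$, with the two indices tied by $i_t=j_t$), and verify that the two separate distinctness constraints generate precisely the product of falling factorials $(p-s)_{i-s}(n-s)_{i-s}$ and, after simplification, the binomial $\binom{p-s}{i-s}$ rather than a single falling factorial. Everything else is a routine specialization of Theorems~\ref{thm1} and~\ref{6.2}.
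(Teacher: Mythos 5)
Your proposal is correct and is essentially the paper's own argument: the same specialization of Theorem~\ref{thm1}, the same collapse $E[B_i(c_1,\ldots,c_i)]=E[c_1^i]$ from $E(\delta^a)=0$ for $a\ge 3$ together with uncorrelation, and the same singleton-umbra count --- your bilinear form with weights $\sigma^2+m_i^2\mathbf{1}(i=j)$ is the paper's binomial expansion of \eqref{(aaa1)} leading to \eqref{cas2eq}, just carried out in the opposite order (count after the collapse rather than before). The only wrinkle is notational: your factor $\sigma^{2(i-s)}$ against $e_s(m_1^2,\ldots,m_p^2)$ is the dimensionally correct one, corresponding to reading the second line of \eqref{(first)} with the non-centrality matrix $\Omega$ in place of $MM^{\trasp}$, whereas the paper sidesteps this normalization entirely by setting $\sigma^2=1$ without loss of generality.
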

\noindent
\begin{proof}
Without loss of generality, set $\sigma^2=1.$  We first prove  
\begin{equation} 
E \left\{ \big[\Tr(\tilde{\Delta}_n^2 \otimes \Delta_p^2)+ \Tr\big(\tilde{M} \tilde{M}^{\trasp}) \big]^i \right\} =
i! \E[\Tr_i(W)]
\label{cas2eq}
\end{equation}
where $\tilde{M}=\Delta_p M \tilde{\Delta}_n.$ Then we prove that 
the rhs of \eqref{cas2eq} is the $i$-th moment of $\Tr\big(\Delta_p {\mathcal X} \tilde{\Delta}_n\big) \big(\Delta_p {\mathcal X} \tilde{\Delta}_n\big),$  that gives \eqref{(finale)}. To prove \eqref{cas2eq} note that 
the umbral polynomial $\big[\Tr\big(\tilde{\Delta}_n^2 \otimes \Delta_p^2\big)+ \Tr\big(\tilde{M} \tilde{M}^{\trasp}\big) \big]^i$ has the same evaluation of
\begin{equation}
\big[ (\chi_1 + \cdots + \chi_p)(\tilde{\chi}_1 + \cdots + \tilde{\chi}_n) + (\chi_1 \tilde{\chi}_1 m^2_1 + \cdots + \chi_p \tilde{\chi}_p m^2_p) \big]^i.
\label{(aaa1)}
\end{equation}
The aim is to recover the second equation in \eqref{(first)}, by using  the binomial expansion in \eqref{(aaa1)} and then by applying $E.$
 Notice that $E[(\chi_1 \tilde{\chi}_1 m^2_1 + \cdots + \chi_p \tilde{\chi}_p m^2_p)^i]$ $ = i!  e_i (m^2_1, \ldots, m^2_p)$ and $E\{[(\chi_1 + \cdots + \chi_p)(\tilde{\chi}_1 + \cdots + \tilde{\chi}_n)]^i\} = (p)_i (n)_i$ from   \eqref{remark6.2}. The subsequent step is to evaluate the cross terms $(\chi_1 + \cdots + \chi_p)^{i-j} (\tilde{\chi}_1 + \cdots + \tilde{\chi}_n)^{i-j} (\chi_1 \tilde{\chi}_1 m^2_1 + \cdots + \chi_p \tilde{\chi}_p m^2_p)^{j}/[(i-j)!j!].$ By using the multinomial expansion and after some algebra, the monomials with not zero evaluation are 
\begin{equation}
(i-j)! \chi_1^{t_1} \timesmall \cdots \timesmall \, \chi_p^{t_p} \tilde{\chi}_1^{s_1} \timesmall \cdots \timesmall \, \tilde{\chi}_n^{s_n} \chi_1^{l_1} \tilde{\chi}_1^{l_1} \timesmall \cdots \timesmall \, \chi_p^{l_p} \tilde{\chi}_p^{l_p} m_1^{2 l_1} \timesmall \cdots \timesmall \, m_p^{2 l_p}
\label{addendi}
\end{equation}
where
$t_1, \ldots, t_p, s_1, \ldots, s_n, l_1, \ldots, l_p \in \{0,1\}$ are such that $t_1 + \cdots + t_p=i-j, s_1 + \cdots + s_n=i-j,  l_1 + \ldots + l_p=j$ and not allowing repetitions of the same singleton umbrae.  Thus in \eqref{addendi} there are $i$ distinct singleton umbrae choosen among $\{\chi_1, \ldots, \chi_p\}$ and $i$ distinct singleton umbrae choosen among $\{\tilde{\chi}_1, \ldots, \tilde{\chi}_n\},$ but with $j$ indexes fixed in both. In particular grouping togheter the monomials in $m_1^2, \ldots, m_p^2$ we recover $(i-j)!$  times the summation
\begin{equation}
\sum_{1 \leq k_1 < \cdots < k_j \leq p} \chi_{k_1} \tilde{\chi}_{k_1} \timesmall \cdots \timesmall \,  \chi_{k_j} \tilde{\chi}_{k_j} m^2_{k_1} \timesmall \cdots \timesmall \, m^2_{k_j} \sum_{t_1 < \cdots < t_{i-j} \atop s_1 < \cdots < s_{i-j}} \chi_{t_1} \timesmall \cdots \timesmall \, \chi_{t_{i-j}}  \tilde{\chi}_{s_1} \timesmall \cdots \timesmall \,\tilde{\chi}_{s_{i-j}}
\label{addendi1}
\end{equation}
where $t_1, \ldots, t_{i-j} \in [p]-\{k_1, \ldots, k_j\}$ and $s_1, \ldots, s_{i-j} \in [n]-\{k_1, \ldots, k_j\}.$  Note that,  fixed the $j$ products $\chi_{k_1} \tilde{\chi}_{k_1}, \ldots, \chi_{k_j} \tilde{\chi}_{k_j}$ (the order does not matter)  in  \eqref{addendi1}, the  singleton umbrae in the second summation can be chosen in $\binom{p-j}{i-j}$ ways among $\{\chi_1, \ldots, \chi_p\}$ and $\binom{n-j}{i-j}$ ways among  $\{\tilde{\chi}_1, \ldots, \tilde{\chi}_n\}.$ Moreover the singleton umbrae in the outer summation of \eqref{addendi1} are uncorrelated with the singleton umbrae of the inner summation and so the evaluation of  \eqref{addendi1} is the same if we relabel the first ones with $\chi'_{k_1} \tilde{\chi}'_{k_1} \timesmall \cdots \timesmall \, \chi'_{k_j} \tilde{\chi}'_{k_j}$ and indexed the second ones with the elements of $[i-j].$  Equation 
\eqref{cas2eq} follows after some algebra, taking into account that 
\begin{eqnarray}
E \bigg(\sum_{1 \leq t_1 < \cdots < t_{i-j} \leq i-j \atop 1 \leq _1 < \cdots < s_{i-j} \leq i-j} \chi_{t_1} \timesmall \cdots \timesmall \,  \chi_{t_{i-j}}  \tilde{\chi}_{s_1} \timesmall \cdots \timesmall \,  \tilde{\chi}_{s_{i-j}} \bigg)  \!\!\!\!  & = & \!\!\!\!  \binom{n-j}{i-j} \binom{p-j}{i-j}, \label{(conjecture)}\\
E \bigg(\sum_{1 \leq k_1 < \cdots < k_j \leq p} \tilde{\chi}_{k_1} \tilde{\chi}'_{k_1} \timesmall \cdots \timesmall \,   \tilde{\chi}_{k_j} \tilde{\chi}'_{k_j} m^2_{k_1} \timesmall \cdots \timesmall \,  m^2_{k_j}\bigg) \!\!\!\!  & = &\!\!\!\!   e_j(m^2_1, \ldots, m^2_p). \nonumber
\end{eqnarray} 
Now let us compute the
moments of  $\Tr(\Delta_p {\mathcal X} \tilde{\Delta}_n) (\Delta_p {\mathcal X} \tilde{\Delta}_n)^{\trasp}$ using Theorem  
\ref{thm1}. For $k \geq 1,$ we have $c_k \big(\deltap, \deltan \big) = q_k \big(\deltap, \deltan \big) + \tilde{q}_k \big(\deltap, \deltan \big)$  with
\begin{eqnarray*}
q_k \big(\deltap, \deltan \big) & = &  (k-1)! \, 2^{k-1}  \Tr[\big(\tilde{\Delta}_n^2 \otimes \,  \Delta_p^2\big)^k], \\
\tilde{q}_k \big(\deltap, \deltan \big) & = & \left\{ \begin{array}{ll}
{\rm vec}^{\trasp}(\tilde{M}) {\rm vec}(\tilde{M}), & k=1, \\
 k! \, 2^{k-1}  \hbox{\rm vec}^{\trasp}(\tilde{M}^{\trasp} ) (\tilde{\Delta}_n^2 \otimes  \Delta_p^2)^{k-1}  \hbox{\rm vec}(\tilde{M}),   & k > 1
 \end{array} \right. 
\end{eqnarray*}
and $\tilde{M} = \Delta_p M \tilde{\Delta}_n.$   By using the same arguments of Theorem  \ref{6.2}, we have  $E[B_i(c_1, \ldots, c_i)] = E[c_1 \big(\deltap, \deltan \big)^i] = E \left\{ \big[\Tr \big(\tilde{\Delta}_n^2 \otimes \Delta_p^2 \big)+ \Tr\big(\tilde{M} \tilde{M}^{\trasp}) \big]^i \right\}$ from which \eqref{(finale)} follows.
\end{proof}
\begin{rem}
{\rm Notice that \eqref{addendi1}, together with \eqref{(conjecture)}, corresponds to the matrix $L_{i,j}$
conjectured by de Waal in \cite{Dewaal}.}
\end{rem}
\begin{cor} \label{5.4}
If ${\mathcal X } \equiv {\mathcal N}_{p,n}(M, \sigma^2 I_p, I_n)$ and $M =P D_m Q^{\trasp},$ with $P$ and $Q$ orthogonal matrices of order $p$ and $n$ respectively and $D_m$ a $p \times n$ rectangular diagonal matrix containing  $m_1, \ldots, m_p$ in the elements with equal indices, then 
\begin{equation}
i! \E[\Tr_i(W)] =  E \left( [\Tr(\Delta_p P^{\trasp} {\mathcal X } Q\tilde{\Delta}_n) (\Delta_p P^{\trasp} {\mathcal X } Q \tilde{\Delta}_n)^{\trasp}]^i \right)\!,
 \,\, i \leq p
\label{(finale2)}
\end{equation} 
where $W=W_p(n, \sigma^2 I_p ,M).$  
\end{cor}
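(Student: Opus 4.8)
The plan is to reduce Corollary \ref{5.4} to Theorem \ref{4.5cor} by absorbing the orthogonal factors $P$ and $Q$ into the umbral matrix, so that the transformed mean becomes the rectangular diagonal matrix $D_m$ required there. Concretely, I would set ${\mathcal Y} := P^{\trasp}{\mathcal X}Q$ and first establish the umbral transformation rule
$$
P^{\trasp}{\mathcal X}Q \equiv {\mathcal N}_{p,n}(D_m, \sigma^2 I_p, I_n).
$$
To see this, I would compute the g.f. directly from the definition $f({\mathcal V},Z) = E[\hbox{\rm etr}({\mathcal V}^{\trasp}Z)]$: using the cyclic invariance of the trace,
$$
f(P^{\trasp}{\mathcal X}Q, Z) = E\left[\hbox{\rm etr}\left(Q^{\trasp}{\mathcal X}^{\trasp}PZ\right)\right] = E\left[\hbox{\rm etr}\left({\mathcal X}^{\trasp}PZQ^{\trasp}\right)\right] = f({\mathcal X}, PZQ^{\trasp}).
$$
Substituting $PZQ^{\trasp}$ into \eqref{mgfnorm1} and applying the identity ${\rm vec}(PZQ^{\trasp}) = (Q\otimes P){\rm vec}(Z)$, the linear term becomes ${\rm vec}^{\trasp}(M)(Q\otimes P){\rm vec}(Z) = {\rm vec}^{\trasp}(P^{\trasp}MQ){\rm vec}(Z)$ and, by orthogonality $P^{\trasp}P = I_p$ and $Q^{\trasp}Q = I_n$, the quadratic term keeps covariance $I_n \otimes \sigma^2 I_p$. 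Since $P^{\trasp}MQ = D_m$, the result is exactly the m.g.f. of ${\mathcal N}_{p,n}(D_m, \sigma^2 I_p, I_n)$, which proves the claimed similarity.

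With this in hand, ${\mathcal Y}$ satisfies the hypotheses of Theorem \ref{4.5cor} with rectangular diagonal mean $D_m$, so applying \eqref{(finale)} to ${\mathcal Y}$ gives
$$
i!\,\E[\Tr_i(W')] = E\left\{[\Tr(\Delta_p {\mathcal Y}\tilde{\Delta}_n)(\Delta_p {\mathcal Y}\tilde{\Delta}_n)^{\trasp}]^i\right\}, \qquad W' = W_p(n, \sigma^2 I_p, D_m).
$$
Re-substituting ${\mathcal Y} = P^{\trasp}{\mathcal X}Q$ on the right-hand side produces precisely the expression appearing in \eqref{(finale2)}, so it remains only to identify the two left-hand sides, that is, to show $\E[\Tr_i(W')] = \E[\Tr_i(W)]$ for $W = W_p(n, \sigma^2 I_p, M)$.

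For this last identification I would invoke the second line of \eqref{(first)}, which expresses $\E[\Tr_i(W)]$ as a function of $M$ only through the quantities $\Tr_j(MM^{\trasp})$. Since $M = PD_mQ^{\trasp}$ gives $MM^{\trasp} = PD_mD_m^{\trasp}P^{\trasp}$, the matrix $MM^{\trasp}$ is orthogonally similar to $D_mD_m^{\trasp}$ and hence shares its eigenvalues $m_1^2, \ldots, m_p^2$; consequently $\Tr_j(MM^{\trasp}) = \Tr_j(D_mD_m^{\trasp}) = e_j(m_1^2, \ldots, m_p^2)$ for every $j$. Therefore $\E[\Tr_i(W)] = \E[\Tr_i(W')]$, and combining with the previous display yields \eqref{(finale2)}.

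I expect the only genuinely delicate step to be the umbral transformation rule $P^{\trasp}{\mathcal X}Q \equiv {\mathcal N}_{p,n}(D_m, \sigma^2 I_p, I_n)$: everything downstream is a direct appeal to Theorem \ref{4.5cor} and to the invariance of the Wishart latent-root e.s.f.'s already encoded in \eqref{(first)}. The crux is to verify, through the ${\rm vec}$ and Kronecker bookkeeping of \eqref{mgfnorm1}, that the bilinear orthogonal action on the formal matrix reproduces the same transformation as on the genuine matrix variate normal, and in particular that orthogonality of $P$ and $Q$ is exactly what preserves the covariance structure $\sigma^2 I_p$ and $I_n$.
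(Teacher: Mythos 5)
Your proposal is correct and takes essentially the same route as the paper: the paper's (one-line) proof likewise reduces to Theorem \ref{4.5cor} by replacing ${\mathcal X}$ with $P^{\trasp}{\mathcal X}Q \equiv {\mathcal N}_{p,n}(D_m,\sigma^2 I_p,I_n),$ a similarity it asserts without the ${\rm vec}$/Kronecker verification you supply. The only divergence is the final identification of the two Wishart expectations, where the paper notes directly that the right-hand side equals $i!\,\E[\Tr_i(P^{\trasp}{\mathsf X}{\mathsf X}^{\trasp}P)] = i!\,\E[\Tr_i(W)]$ since orthogonal conjugation leaves the latent roots unchanged, while you instead re-invoke the closed form \eqref{(first)} via $\Tr_j(MM^{\trasp})=\Tr_j(D_m D_m^{\trasp})$; both arguments are valid, and yours adds no new dependency since the proof of Theorem \ref{4.5cor} already rests on \eqref{(first)}.
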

\noindent
\begin{proof}
The result follows from Theorem \ref{4.5cor} with ${\mathcal X }$ replaced by $P^{\trasp} {\mathcal X } Q \equiv {\mathcal N}_{p,n}(D_m, \sigma^2 I_p, I_n)$ as we have 
$$E \left( [\Tr(\Delta_p P^{\trasp} {\mathcal X} Q \tilde{\Delta}_n) (\Delta_p P^{\trasp} {\mathcal X} Q  \tilde{\Delta}_n)^{\trasp}]^i \right) = i! \E[\Tr_i(P^{\trasp} {\mathsf X} {\mathsf X}^{\trasp} P)].$$
\end{proof}
\begin{cor} \label{5.9}
If  ${\mathcal X } \equiv {\mathcal N}_{p,n}(M, \Sigma , I_n)$  and $\Sigma^{-1/2} M = P  D_m Q^{\trasp},$ with $P$ and $Q$ orthogonal matrices of order $p$ and $n$ respectively and $D_m$ a $p \times n$ rectangular diagonal matrix containing  $m_1, \ldots, m_p$ in the elements with equal indices, then 
\begin{equation}
p! \E[\Tr_p(W)] = \det(\Sigma) E[ (\Tr(\Delta_p P^{\trasp} {\mathcal X } Q \tilde{\Delta}_n)(\Delta_p P^{\trasp} {\mathcal X } Q \tilde{\Delta}_n)^{\trasp})^p ]
\label{(finale1)}
\end{equation}
where $W=W_{p}(n, \Sigma, M).$  
\end{cor}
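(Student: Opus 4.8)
The plan is to reduce the statement to the already established Corollary~\ref{5.4} by first whitening the covariance, and then to use that at $i=p$ the top elementary symmetric function $\Tr_p$ is the product of all the latent roots, hence a determinant. The restriction to $i=p$ is essential: under the congruence $W\mapsto\Sigma^{-1/2}W\Sigma^{-1/2}$ only the determinant, and not the individual e.s.f.'s, rescales by a single scalar, and that scalar is precisely $\det(\Sigma)$.

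First I would pass to the whitened umbral matrix. Since ${\mathcal X}\equiv{\mathcal N}_{p,n}(M,\Sigma,I_n)$, the transformation rule for the normal umbral matrix gives $\Sigma^{-1/2}{\mathcal X}\equiv{\mathcal N}_{p,n}(\Sigma^{-1/2}M,I_p,I_n)$. By hypothesis $\Sigma^{-1/2}M=P D_m Q^{\trasp}$, so $\Sigma^{-1/2}{\mathcal X}$ meets the hypotheses of Corollary~\ref{5.4} with $\sigma^2=1$. Applying that corollary to $\Sigma^{-1/2}{\mathcal X}$ in place of ${\mathcal X}$ gives, at $i=p$,
$$p!\,\E[\Tr_p(W')]=E\big([\Tr(\Delta_p P^{\trasp}\Sigma^{-1/2}{\mathcal X} Q\tilde{\Delta}_n)(\Delta_p P^{\trasp}\Sigma^{-1/2}{\mathcal X} Q\tilde{\Delta}_n)^{\trasp}]^p\big),$$
where $W'=(\Sigma^{-1/2}{\mathsf X})(\Sigma^{-1/2}{\mathsf X})^{\trasp}=\Sigma^{-1/2}W\Sigma^{-1/2}$; here the orthogonality of $P$ is used exactly as in the proof of Corollary~\ref{5.4} to equate the polynomial-trace evaluation with $p!\,\E[\Tr_p(W')]$ through the invariance of $\Tr_p$ under conjugation by $P$. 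In particular the whitening factor $\Sigma^{-1/2}$ is what must accompany ${\mathcal X}$ inside the trace in \eqref{(finale1)}.

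Next I would insert the determinant identity. At $i=p$ we have $\Tr_p(W')=\det(W')$ and $\Tr_p(W)=\det(W)$, and since $\det(\Sigma^{-1/2})^2=\det(\Sigma)^{-1}$,
$$\Tr_p(W')=\det(\Sigma^{-1/2}W\Sigma^{-1/2})=\det(W)/\det(\Sigma)=\Tr_p(W)/\det(\Sigma).$$
Taking expectations and multiplying by $\det(\Sigma)$ turns the previous display into \eqref{(finale1)}. As a consistency check, for $M=0$ the transformed matrix is central and white, so the right-hand side equals $(n)_p(p)_p$ by \eqref{remark6.2}; multiplying by $\det(\Sigma)$ yields $(n)_p\,p!\det(\Sigma)$, matching $p!\,\E[\Tr_p(W)]=p!\,(n)_p\Tr_p(\Sigma)=p!\,(n)_p\det(\Sigma)$ from the first line of \eqref{(first)}.

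The main obstacle is justifying the appeal to Corollary~\ref{5.4} for the transformed matrix, that is, that the delta evaluation of its polynomial trace genuinely reproduces $p!\,\E[\Tr_p(W')]$. After whitening this is immediate, since $\Sigma^{-1/2}{\mathcal X}$ has white covariance $I_p$ and rectangular-diagonal mean $D_m$, placing us verbatim in the hypotheses of Theorem~\ref{4.5cor}. Whitening cannot be skipped: the surviving first cumulant $c_1$ contains the rotated covariance only through $\Tr[\Delta_p^2(P^{\trasp}\Sigma P)]=\sum_i\delta_i^2(P^{\trasp}\Sigma P)_{ii}$, i.e. through its diagonal entries rather than its eigenvalues, so without the $\Sigma^{-1/2}$ factor $E[c_1^p]$ would compute the wrong symmetric function of $\Sigma$ -- already for $M=0$ it would return $(n)_p\,p!\prod_i(P^{\trasp}\Sigma P)_{ii}$ instead of $(n)_p\,p!\det(\Sigma)$. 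Thus whitening is simultaneously what legitimizes Corollary~\ref{5.4} and what produces the factor $\det(\Sigma)$, while the remaining determinant manipulation and the identification $\Tr_p=\det$ are routine.
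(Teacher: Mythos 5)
Your proposal is correct and takes essentially the same route as the paper's proof: whiten to $\Sigma^{-1/2} W \Sigma^{-1/2} = W_p(n, I_p, \Sigma^{-1/2} M)$, apply Corollary~\ref{5.4} to the whitened matrix, and use $\Tr_p = \det$ so that the congruence rescales the expectation by exactly $\det(\Sigma)^{-1}$. Your further observation that the whitening factor $\Sigma^{-1/2}$ must accompany ${\mathcal X}$ inside the trace in \eqref{(finale1)}, since the delta evaluation sees only the diagonal entries of the rotated covariance rather than its eigenvalues, is precisely what the paper's proof does implicitly when it invokes Corollary~\ref{5.4} for $\Sigma^{-1/2} W \Sigma^{-1/2}$ (whose umbral counterpart is $\Sigma^{-1/2}{\mathcal X}$), even though the displayed formula in the corollary writes ${\mathcal X}$ without that factor.
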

\noindent
\begin{proof}
As $\Sigma^{-1/2}$ is a symmetric square root matrix, we have $\E[\Tr_p(\Sigma^{-1/2} W \Sigma^{-1/2})] =\E[ \det(\Sigma^{-1/2} W$ $ \Sigma^{-1/2})] = \det(\Sigma^{-1})$ 
$\E[\det(W)] = \det(\Sigma^{-1})  \E[\Tr_p(W)]$ with 
$\Sigma^{-1/2} W \Sigma^{-1/2} = W_{p}(n, I_p, \Sigma^{-1/2} M)$ \cite{Gupta}. The result follows from Corollary \ref{5.4} applied to  $\Sigma^{-1/2} W \Sigma^{-1/2}.$ 
\end{proof}  
\vskip2pt \indent
To express $\E[\Tr_i(W)]$ in terms of umbral polynomial traces, notice that $\Tr_i(W) = \sum_{j(i)} \det W_{j(i)},$ with $W_{j(i)}$
the Wishart random matrix with covariance $\Sigma_{j(i)},$  
non-centrality matrix $\Sigma^{{\scriptscriptstyle -1}}_{j(i)} (M M^{\trasp})_{j(i)},$ where $j(i) = \left\{ j_1, \ldots, j_i \right\}$ $\subset \{1, \ldots, p\}$ denotes the principal submatrix corresponding to the $j_1 < \cdots < j_i$-th rows and the $j_1  < \cdots < j_i$-th  columns of $W.$ 
From \eqref{(finale1)}, we get 
$$
\E[\Tr_i(W)] = \sum_{j(i)} \det(\Sigma_{j(i)}) E \left\{ \Tr_i\big[ \tilde{{\mathcal X}}_{j(i)}  \tilde{{\mathcal X}}_{j(i)}^{\trasp} \big] \right\},  \,\, i \leq p,
$$
with $\tilde{{\mathcal X}}_{j(i)} = \Delta_i P_i  {\mathcal X}_i Q_i^{\trasp} \Delta_n$ where ${\mathcal X}_i$ is a normal umbral matrix such that ${\mathcal X}_i  {\mathcal X}_i ^{\trasp}$ is the umbral counterpart of $W_{j(i)}$ and $P_i$ and $Q_i$ are squared matrices of order $i$ and $n$ respectively having the same properties of $P$ and $Q$ given in Corollary \ref{5.9} with respect to the non-centrality matrix $ \Sigma^{{\scriptscriptstyle -1}}_{j(i)} (M M^{\trasp})_{j(i)}.$
\section{Conclusions}
The purpose of this paper is to show how to use the umbral operator and the symbolic calculus to recover the e.s.f.'s in the Wishart matrix latent roots. To achieve this goal, we have introduced a new class of polynomials related to a random matrix trace and named polynomial traces. When the delta umbrae are plugged in the indeterminates and the corresponding umbral polynomials are evaluated through the operator $E,$ all the monomials not contributing in the e.s.f.'s delete. For some special case, see \eqref{cas2eq}, the e.s.f.'s can be recovered plainly through a binomial expansion of the sum of two polynomial traces, one related to the central component of the Wishart matrix and the other related to the mean of $W.$

The paper leaves open many questions which are in the agenda of future research.  First, this symbolic calculus might be applied to a wider class of variate distributions, see \cite{Mijares} and \cite{Pillai}.  Secondly, similar computations might be carried out for other families of symmetric polynomials, see \cite{Wishartmio}. 
In addition, the achieved results suggest to exploit the connection between  zonal polynomials and the polynomial traces, as manageable expressions of zonal polynomials are not yet known. Last but not least, the Wishart distribution exists for a larger range of the shape parameter, see \cite{Graczyk, LetacMassam} and \cite{Mayerhofer}. The method here proposed works for any positive integer $n$ as it relies on m.g.f.'s and not involves probability density functions. Taking into account the results of \cite{Wishartmio} (Proposition $8$), the method might be extended to more general shape parameter due to the infinitely divisibility of a Wishart distribution. 
\section*{Appendix}
Joint moments  of a square random matrix $Y$ of order $p$ are $\mu(Y)(\tau) =  \prod_{c \in C(\tau)} \Tr\left(Y^{{\mathfrak l}(c)}\right)$ and depend only on the cycle class  $C(\tau)$ of the permutation $\tau \in   {\mathfrak S}_i,$ see \cite{CC}. In particular we have $\E \left[ \mu(Y)({\mathfrak e}) \right] = \E[\Tr(Y)^i]$ where ${\mathfrak e}$ is the identity permutation, and $\mu(I_p)(\tau) = p^{|C(\tau)|}.$ 
To recover \eqref{(symPS1bis)}, write out the $i$-th e.s.f. in \eqref{(elfun)} through the $i$-th Bell polynomial as follows
\begin{equation}
i! e_i(y_1, \ldots, y_p)  =  \sum_{\lambda \vdash i} d_{\lambda} \prod_{k} \big[(-1)^{k-1} (k-1)! \Tr(D_y^k)\big]^{r_k} = 
\sum_{\lambda \vdash i}  (-1)^{i-\ell(\lambda)} d_{\lambda} \prod_{k}  \big[(k-1)! \Tr(D_y^k)\big]^{r_k} 
\label{(symPS)}
\end{equation}
where the summation is over all partitions $\lambda = (1^{r_1}2^{r_2} \ldots)$ of the integer $i = r_1 + 2 r_2 + \cdots$ in $\ell(\lambda)=r_1 + r_2 + \cdots$ parts and  $d_{\lambda} = i!/[(1!)^{r_1} r_1! (2!)^{r_2} r_2! \times \cdots].$  Recall that the cycle class  of a permutation $\tau$ with $r_1$ cycles of length $1,$ $r_2$
cycles of length $2,$ and so on, is the integer partition $\lambda = (1^{r_1} 2^{r_2} \ldots)   \vdash i,$ whose number of parts $\ell(\lambda)$ is equal to the number of cycles $|C(\tau)|.$ By observing that $d_{\lambda} = {\mathfrak s}_{\lambda}/[(1!)^{r_2} (2!)^{r_3} \times \cdots],$
where ${\mathfrak s}_{\lambda}$ is the number of permutations 
$\tau \in  {\mathfrak S}_i$ of cycle class $\lambda = (1^{r_1}2^{r_2} \ldots) \vdash i \leq p,$ from \eqref{(symPS)} we have 
\begin{equation}
i! e_i(y_1, \ldots, y_p)  =  \sum_{\lambda \vdash i} {\mathfrak s}_{\lambda}  (-1)^{i-\ell(\lambda)} \prod_{k} \big[\Tr(D_y^k)\big]^{r_k} = \sum_{\tau \in {\mathfrak S}_i} (-1)^{i-|C(\tau)|} \mu(D_y)(\tau)
\label{(symPSbis)}
\end{equation}
where the last equality follows by indexing the summation with respect to permutations. Hence \eqref{(symPS1bis)} follows from \eqref{(symPSbis)} by replacing $y_1, \ldots, y_p$ with $Y_1, \ldots, Y_p.$

\end{document}